\date{}
\theoremstyle{plain}
      \newtheorem{theorem}{Theorem}[section]
      \newtheorem{lemma}[theorem]{Lemma}
      \newtheorem{corollary}[theorem]{Corollary}
      \newtheorem{conjecture}[theorem]{Conjecture}
\theoremstyle{definition}
\theoremstyle{remark}
\def\VC{\mbox{\rm VC-dim}}
\def\LS{\mbox{\rm LS-dim}}
\title{Sunflowers in set systems of bounded dimension}
\author{Jacob Fox\thanks{Stanford University, Stanford, CA. Supported by a Packard Fellowship and by NSF award DMS-1855635. Email: {\tt jacobfox@stanford.edu.}} \and J\'anos Pach\thanks{R\'enyi Institute, Budapest and MIPT, Moscow. Supported by NKFIH grants K-176529, KKP-133864, Austrian Science Fund Z 342-N31, Ministry of Education and Science of the Russian Federation MegaGrant No. 075-15-2019-1926, ERC Advanced Grant ``GeoScape.'' Email:
{\tt pach@cims.nyu.edu}.}\and  Andrew Suk\thanks{Department of Mathematics, University of California at San Diego, La Jolla, CA, 92093 USA. Supported an NSF CAREER award, NSF award DMS-1952786, and an Alfred Sloan Fellowship. Email: {\tt asuk@ucsd.edu}.} }
\begin{document}

\maketitle

\begin{abstract}
Given a family $\mathcal F$ of $k$-element sets, $S_1,\ldots,S_r\in\mathcal F$ form an {\em $r$-sunflower} if $S_i \cap S_j =S_{i'} \cap S_{j'}$ for all $i \neq j$ and $i' \neq j'$. According to a famous conjecture of Erd\H os and Rado (1960), there is a constant $c=c(r)$ such that if $|\mathcal F|\ge c^k$, then $\mathcal F$ contains an $r$-sunflower.

We come close to proving this conjecture for families of bounded {\em Vapnik-Chervonenkis dimension}, $\VC(\mathcal F)\le d$. In this case, we show that $r$-sunflowers exist under the slightly stronger assumption $|\mathcal F|\ge2^{10k(dr)^{2\log^{*} k}}$. Here, $\log^*$ denotes the iterated logarithm function.

We also verify the Erd\H os-Rado conjecture for families $\mathcal F$ of bounded {\em Littlestone dimension} and for some geometrically defined set systems.
\end{abstract}

\section{Introduction}

An {\em $r$-sunflower} is a collection of $r$ sets whose pairwise intersections are the same. That is, $r$ distinct sets $S_1,\ldots,S_r$ form an $r$-sunflower if $S_i \cap S_j = S_{i'} \cap S_{j'}$ for all $i \not = j$ and $i' \not = j'$.  The term was coined by Deza and Frankl~\cite{DF}. For brevity, a $k$-element set is called a {\em $k$-set}.

Let $f_r(k)$ be the minimum positive integer $m$ such that every family of $k$-sets whose size is at least $m$ contains $r$ members that form an $r$-sunflower. Erd\H{o}s and Rado \cite{ErRa} proved that $f_r(k) \leq k!(r-1)^k$. The Erd\H{o}s-Rado ``sunflower conjecture'' states that there is a constant $C=C(r)$ depending only on $r$ such that $f_r(k) \leq C^k$.  Over the years, some small improvements have been made on the upper bound $k!(r-1)^k$, see \cite{AbHS, Ko}. Very recently, a breakthrough has been achieved by Alweiss, Lovett, Wu, and Zhang \cite{ALWZ}, who proved that \[f_r(k) \leq (cr^3\log k\log\log k)^k,\]
where $c$ is an absolute constant. For an alternative proof of this result, using Shannon capacities, see \cite{R}. Some weaker versions of the conjecture are discussed in \cite{AH, ErSz, NS}.
\smallskip

The aim of this note is to study the Erd\H os-Rado sunflower conjecture for families of bounded dimension.  Apart from set systems realized in low-dimensional Euclidean spaces, we consider two additional notions of dimension:  the {\em Vapnik-Chervonenkis dimension} (in short, VC-dimension) and the {\em Littlestone dimension} (LS-dimension), introduced in \cite{VC} and \cite{Little}, respectively. Both are important combinatorial parameters that measure the complexity of graphs and hypergraphs, and play important roles in statistics, algebraic geometry, PAC learning, and in model theory.
There is a growing body of results in extremal combinatorics and Ramsey theory which give much better bounds or stronger conclusions under the additional assumption of bounded dimension (see \cite{FPS1,FPS2}).
\smallskip

Given a family of sets $\mathcal{F}$ with ground set $V$,  the {\em VC-dimension} of $\mathcal{F}$, denoted by $\VC(\mathcal F)$, is the maximum $d$ for which there exists a $d$-element set $S\subset V$ such that for every subset $B\subset S$, one can find a member $A\in \mathcal{F}$ with $A\cap S=B$. In this case, we say that $S$ is {\em shattered} by $\mathcal F$.

Let $f^d_{r}(k)$ denote the least positive integer $m$ such that every family $\mathcal F$ of $k$-sets with $|\mathcal F|\ge m$ and $\VC(\mathcal F)\le d$ contains an $r$-sunflower. Clearly, we have $f^d_{r}(k) \leq f_r(k)$, and the Erd\H os-Rado sunflower conjecture implies the following weaker conjecture.

\begin{conjecture}\label{conje}
For $d\geq 1$ and $r\geq 3$, there is a constant $C =C(d,r)$ such that $f^d_r(k) \leq C^k$.
% That is, every family of $k$-sets with VC-dimension at most $d$ and cardinality at least $C^k$ has an $r$-sunflower.
\end{conjecture}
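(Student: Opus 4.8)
The plan is to run the classical Erd\H os--Rado recursion on $k$, but to force the multiplicative loss at each step to be a constant depending only on $d$ and $r$ instead of a quantity growing with $k$. Suppose $\mathcal F$ is a family of $k$-sets with $\VC(\mathcal F)\le d$ and no $r$-sunflower, and write $\mathcal F(T)=\{S\in\mathcal F:T\subseteq S\}$ and $\mathcal F_T=\{S\setminus T:S\in\mathcal F(T)\}$ for its links. If I can always find an element $v$ of the ground set lying in at least $|\mathcal F|/C(d,r)$ members, then $\mathcal F_{\{v\}}$ is a family of $(k-1)$-sets with $|\mathcal F_{\{v\}}|\ge|\mathcal F|/C(d,r)$ that still has VC-dimension $\le d$ (a link is a projection, which cannot increase the VC-dimension: if the petals shatter $A$ then so does $\mathcal F$) and still has no $r$-sunflower (an $r$-sunflower in $\mathcal F_{\{v\}}$ with core $Q$ lifts to one in $\mathcal F$ with core $Q\cup\{v\}$). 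Iterating $k$ times, with the trivial base case that $r$ distinct singletons already form an $r$-sunflower, gives $f^d_r(k)\le C(d,r)^k$, which is exactly Conjecture~\ref{conje}. Such a heavy element exists as soon as the transversal number $\tau(\mathcal F)$ is at most $C(d,r)$, since then a minimum transversal has an element in at least $|\mathcal F|/\tau(\mathcal F)$ sets. So everything reduces to:

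\smallskip
\noindent\emph{Key Claim.} If $\mathcal F$ is a family of $k$-sets with $\VC(\mathcal F)\le d$ and no $r$-sunflower, then $\tau(\mathcal F)\le C(d,r)$.
\smallskip

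This is equivalent to the cleaner ``spread'' formulation: there is $\kappa_0=\kappa_0(d,r)$ such that every family of $k$-sets of VC-dimension $\le d$ that is $\kappa_0$-spread --- meaning $|\mathcal F(T)|\le\kappa_0^{-|T|}|\mathcal F|$ for every nonempty $T$ --- contains an $r$-sunflower. Indeed, if $\mathcal F$ with $|\mathcal F|\ge\kappa_0^k$ fails to be $\kappa_0$-spread, some nonempty $T$ has $|\mathcal F_T|=|\mathcal F(T)|>\kappa_0^{k-|T|}$, and one recurses on $\mathcal F_T$ (still VC-dimension $\le d$, still no $r$-sunflower, smaller parameter $k$); so the Key Claim and a spread lemma are interchangeable, and either gives $f^d_r(k)\le\kappa_0(d,r)^k$. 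To prove the Key Claim I would mine the structure that $\VC(\mathcal F)\le d$ imposes once $|\mathcal F|$ is large: by Sauer--Shelah the ground set already has size at least roughly $|\mathcal F|^{1/d}$, so we operate over an exponentially large universe; the trace of $\mathcal F$ on any window $W$ has only $|W|^{O(d)}$ sets; Haussler's packing lemma bounds how dispersed the members are in symmetric difference; and --- the main lever --- bounded VC-dimension forces a bounded fractional Helly number (Matou\v{s}ek), which via the $(p,q)$-theorem of Alon--Kleitman turns strong intersection patterns into a bounded transversal number. The catch is that ``no $r$-sunflower'' by itself only gives ``no $r$ pairwise disjoint sets,'' i.e.\ the $(r,2)$-property, and $q=2$ sits below the fractional Helly threshold in general. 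What one should exploit is that the hypothesis is self-improving: every link $\mathcal F(K)$ is again $r$-sunflower-free, so the $(r,2)$-property holds simultaneously at \emph{every} level $K$; combined with an $\varepsilon$-net/random-sampling reduction of the huge ground set to a window of size bounded in $d$ and $r$, the aim is to bootstrap this into the full intersection hypothesis a $(p,q)$-style argument needs.

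The main obstacle is precisely this last step: obtaining the Key Claim with a constant parameter. Used as a black box, the Alweiss--Lovett--Wu--Zhang spread lemma only delivers $\kappa_0\asymp r^3\log k$, hence a bound of order $(r^3\log k)^k$; and the near-optimal bound $2^{10k(dr)^{2\log^* k}}$ in the VC setting evidently comes from a self-improvement that must be iterated $\log^* k$ times, each round still losing a little. Collapsing that iteration --- removing every last dependence on $k$ from the spread parameter, equivalently from the piercing number of an $r$-sunflower-free VC-$d$ family --- is the crux, and is exactly where the argument currently stalls; it is why Conjecture~\ref{conje} remains open. If the conjecture is true, I expect the resolution to come from feeding a genuinely VC-dimensional input (fractional Helly, the bounded dual shatter function, or Haussler packing) directly into the recursion so as to upgrade ``no $r$ pairwise disjoint sets at every level'' to ``constant transversal number,'' bypassing the spread lemma altogether.
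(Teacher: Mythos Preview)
This statement is a \emph{conjecture}, and the paper does not prove it; the concluding remarks say explicitly that it ``still remains open for families of bounded VC-dimension.'' What the paper proves are the weaker Theorems~\ref{VC1} and~\ref{main}. So there is no ``paper's own proof'' to compare against, and your write-up is not a proof either --- as you yourself say at the end, the argument ``stalls'' at the Key Claim.

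That said, your diagnosis of the difficulty is accurate and lines up with the paper's methods. Your reduction to a bound $\tau(\mathcal F)\le C(d,r)$ on the transversal number of an $r$-sunflower-free family of VC-dimension $\le d$ is exactly the engine behind Theorem~\ref{VC1}: there the paper observes that $\VC(\mathcal F)=1$ forces $\lambda(\mathcal F)\le 3$, and then the Ding--Seymour--Winkler inequality (Lemma~\ref{stab}) bounds $\tau$ by a function of $\lambda$ and $\nu\le r-1$ alone, giving precisely your Key Claim in the $d=1$ case and hence $f^1_r(k)\le C(r)^k$. For $d\ge 2$, however, no bound $\lambda(\mathcal F)\le\lambda(d)$ is available, and your Key Claim is not known; the $(p,q)$/fractional-Helly route you sketch needs $q$ above the Helly threshold, and ``no $r$ pairwise disjoint sets at every link'' has not been upgraded to that. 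The paper's Theorem~\ref{main} sidesteps the Key Claim entirely: instead of bounding $\tau$, it runs a probabilistic recursion on $k\mapsto\log k$ using Sauer--Shelah to control traces on a window, which is why the final exponent carries a $\log^* k$ factor. Your proposal correctly identifies that removing this iterated loss --- equivalently, establishing the Key Claim --- is the crux, and that is precisely the open problem.
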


It is not difficult to see that, even for $d=1$, the function $f^1_r(k)$ grows at least exponentially in $k$. More precisely, we have $f^1_r(k) > (r-1)^{k-1}$. Indeed, consider a rooted complete $(r-1)$-ary tree $T$ with the root on level $0$ and with $(r-1)^{k-1}$ leaves on level $k-1$. Let $\mathcal{F}$ be the family of $k$-sets consisting of the vertex sets of the root-to-leaf paths in $T$. Obviously, $\mathcal{F}$ does not contain any $r$-sunflower, and its VC-dimension is at most 1. 

More generally, we have the recursive lower bound \[f^d_r(k_1+k_2) > (f^d_r(k_1)-1)(f^d_r(k_2)-1).\] Indeed, for $i=1,2$, let $\mathcal{F}_i$ be a family of  $k_i$-sets of size $f^d_r(k_i)-1$ with VC-dimension at most $d$ and without any $r$-sunflower. For each set $S$ in $\mathcal{F}_1$, make a new copy of $\mathcal{F}_2$ and add $S$ to each set in $\mathcal{F}_2$. 
The ground set of copies of $\mathcal{F}_2$ are pairwise disjoint for distinct sets of $\mathcal{F}_1$. The resulting set system $\mathcal{F}$ is $(k_1+k_2)$-uniform with size $(f^d_r(k_1)-1)(f^d_r(k_2)-1)$, VC-dimension at most $d$, and has no $r$-sunflower. This implies that if $f_r(k')>C^{k'}+1$ for some $k'$ and $C$, then there is $d$ depending on $k'$ such that for all sufficiently large $k$, $f^d_r(k)>C^k$. Thus, any exponential lower bound for the classical sunflower problem (with unbounded VC-dimension) can be achieved by a construction with bounded (but sufficiently large) VC-dimension. 
\smallskip

Using a result of Ding, Seymour, and Winkler \cite{DSW}, we settle Conjecture~\ref{conje} for families of $k$-sets with VC-dimension $d = 1$.

\begin{theorem}\label{VC1}
For integers $r\geq 3$ and $k\ge 1$, every family of $k$-sets with VC-dimension $d=1$ and cardinality at least $r^{10k}$  has an $r$-sunflower.  That is, we have
\[f^1_r(k) \leq r^{10k}.\]
\end{theorem}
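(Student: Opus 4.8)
The plan is to mimic the Erd\H os--Rado deletion argument, but to replace its crude transversal estimate (based on a maximal matching) by the much stronger bound that bounded VC-dimension makes available through the theorem of Ding, Seymour, and Winkler~\cite{DSW}. Write $f=f^1_r$. I will establish a recursion $f(k)\le (T_r+1)\,f(k-1)$ for $k\ge 2$, where $T_r$ is a quantity \emph{polynomial in $r$ and independent of $k$}; since $f(1)=r$ --- any $r$ distinct singletons form a sunflower with empty core, while $r-1$ do not --- iterating gives $f(k)\le (T_r+1)^{k-1}r$, which lies below $r^{10k}$ as soon as $T_r$ is bounded by a fixed power of $r$ and $r\ge 3$.

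So let $\mathcal F$ be a family of $k$-sets on a ground set $V$ with $\VC(\mathcal F)\le 1$ and no $r$-sunflower; the goal is $|\mathcal F|\le (T_r+1)f(k-1)$. Take a maximum subfamily $M\subseteq\mathcal F$ of pairwise disjoint sets. If $|M|\ge r$ then $M$ is itself an $r$-sunflower (core $\emptyset$), a contradiction, so the matching number satisfies $\nu(\mathcal F)=|M|\le r-1$. Now I invoke Ding--Seymour--Winkler: for any hypergraph $H$ they bound the transversal number $\tau(H)$ by an explicit function of $\nu(H)$ and of an auxiliary ``ladder''-type parameter, and the point is that $\VC(H)\le 1$ pins that parameter down to an absolute constant --- a long ladder would shatter a $2$-element set, which VC-dimension $1$ forbids. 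Feeding $\nu(\mathcal F)\le r-1$ and a constant ladder parameter into their inequality produces a transversal $X\subseteq V$ (a set meeting every member of $\mathcal F$) with $|X|\le T_r$, $T_r$ polynomial in $r$.

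Given such an $X$, every $A\in\mathcal F$ meets $X$, and by Sauer--Shelah $\mathcal F$ induces at most $|X|+1$ distinct traces on $X$; hence some fixed nonempty $Y\subseteq X$ equals $A\cap X$ for at least $|\mathcal F|/(T_r+1)$ members $A$, forming a subfamily $\mathcal F_Y$ (bare pigeonhole over the $\le T_r$ vertices of $X$ would also suffice). Every member of $\mathcal F_Y$ contains the fixed nonempty set $Y$, so $\mathcal F_Y':=\{A\setminus Y:A\in\mathcal F_Y\}$ consists of $(k-|Y|)$-sets --- at most $(k-1)$-sets --- has the same cardinality as $\mathcal F_Y$ (the map $A\mapsto A\setminus Y$ is injective on $\mathcal F_Y$), has VC-dimension $\le 1$ (deleting coordinates cannot increase it), and contains no $r$-sunflower, since an $r$-sunflower in $\mathcal F_Y'$ would lift to one in $\mathcal F_Y\subseteq\mathcal F$ by re-adjoining $Y$. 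Therefore $|\mathcal F_Y'|\le f(k-1)$ --- using that $f$ is nondecreasing, or, if $|Y|=k$, noting directly that $\mathcal F_Y=\{Y\}$ --- and so $|\mathcal F|\le (T_r+1)f(k-1)$, which is the claimed recursion.

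The step I expect to be the main obstacle is the transversal bound: one must extract from Ding--Seymour--Winkler an estimate that is polynomial in $\nu\le r-1$ (a bound only exponential in $\nu$ would not reach $r^{10k}$) and must verify carefully that VC-dimension $1$ caps their auxiliary parameter by a small enough absolute constant. The other ingredients --- the matching dichotomy, the trace/pigeonhole split, and the single deletion step --- are routine.
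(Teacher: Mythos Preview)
Your approach is essentially identical to the paper's: both argue by induction on $k$, use that the absence of an $r$-sunflower forces $\nu(\mathcal F)\le r-1$, observe that $\VC(\mathcal F)\le 1$ bounds the Ding--Seymour--Winkler parameter $\lambda(\mathcal F)$ by an absolute constant (the paper shows $\lambda\le 3$, since four sets witnessing $\lambda\ge 4$ would shatter the pair $\{v_{12},v_{13}\}$ via $S_1,S_2,S_3,S_4$), and then apply the DSW transversal bound to find a popular vertex and delete it. The only cosmetic difference is that you pigeonhole over traces on the transversal and delete a whole trace $Y$, whereas the paper deletes a single popular vertex; your terminology ``ladder parameter'' for $\lambda$ is nonstandard, but the substance is the same.
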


Let $\log^*k$ denote the {\em iterated logarithm} of $k$, {\em i.e.}, the minimum $i$ for which the $i$ times iterated logarithm of $k$ satisfies $\log^{(i)} k \leq 2$. All logarithms used in this note are of base 2.

For $d\ge 2$, our upper bound on $f_r^d(k)$ is not far from the one stated in Conjecture~\ref{conje}.

\begin{theorem}\label{main}
For integers $d, k, r \geq 2$, every family of $k$-sets with VC-dimension at most $d$ and cardinality at least $2^{10k(dr)^{2\log^* k}}$  has an $r$-sunflower. In notation,
\[f^d_r(k) \leq 2^{10k(dr)^{2\log^* k}}.\]
\end{theorem}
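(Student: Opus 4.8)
The plan is to induct on the uniformity $k$. Write $g(k):=f^{d}_{r}(k)-1$ for the largest cardinality of a family of $k$-sets with VC-dimension at most $d$ that contains no $r$-sunflower, so the assertion to be proved is $g(k)<2^{10k(dr)^{2\log^{*}k}}$. When $\log k<(dr)^{2}$ (the range in which the reduction below is vacuous) this follows from the Erd\H{o}s--Rado estimate $g(k)\le f_{r}(k)\le k!(r-1)^{k}$ together with the slack in the constant $10$. The inductive step rests on a \emph{dimension-reduction lemma}: if $\mathcal F$ is $k$-uniform, has VC-dimension at most $d$, and avoids $r$-sunflowers, then the ground set can be partitioned into $M:=(dr)^{2}k/\log k$ blocks $V_{1},\dots ,V_{M}$ and $\mathcal F$ replaced by a subfamily $\mathcal F'$ with $|\mathcal F'|\ge |\mathcal F|/2^{O(k)}$ so that, writing $s_{j}:=|S\cap V_{j}|$ (which may be assumed constant on $\mathcal F'$) and $\mathcal F_{j}:=\{S\cap V_{j}:S\in \mathcal F'\}$, one has $\sum_{j}s_{j}=k$, every $s_{j}\le \log k$, and \emph{every} $\mathcal F_{j}$ is an $r$-sunflower-free family of $s_{j}$-sets of VC-dimension at most $d$.

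Granting the lemma, the step is immediate. The injection $S\mapsto (S\cap V_{1},\dots ,S\cap V_{M})$ of $\mathcal F'$ into $\prod_{j}\mathcal F_{j}$ gives $\log |\mathcal F|\le O(k)+\sum_{j}\log g(s_{j})$; since $1\le s_{j}\le \log k$ we have $\log^{*}s_{j}\le \log^{*}k-1$, so by the induction hypothesis $\log g(s_{j})\le 10\,s_{j}\,(dr)^{2(\log^{*}k-1)}$, and therefore
\[
\log |\mathcal F|\ \le\ O(k)+\frac{10k\,(dr)^{2\log^{*}k}}{(dr)^{2}}\ <\ 10k(dr)^{2\log^{*}k},
\]
the last step using $(dr)^{2}\ge 16$. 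As $\mathcal F$ was arbitrary this closes the induction; morally, $\log^{*}k$ is the number of times the reduction is applied before the uniformity falls below $2^{(dr)^{2}}$, and each application costs one factor of $(dr)^{2}$ in the base.

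It remains to establish the dimension-reduction lemma, and this is where essentially all the difficulty lies. I would take the partition $V_{1},\dots ,V_{M}$ at random: for fixed $S\in\mathcal F$ each $|S\cap V_{j}|$ is binomial with mean $\log k/(dr)^{2}\le \log k$, and a Chernoff estimate shows that, typically, only an $o(1)$-fraction of the sets have some block-intersection exceeding $\log k$; discarding those and then passing to a subfamily with a fixed profile costs only a factor $\binom{k+M-1}{M-1}\le 2^{O(k)}$, yields all $s_{j}\le\log k$, and preserves VC-dimension since each $\mathcal F_{j}$ is a trace of $\mathcal F$. The one genuinely hard point is the requirement that every $\mathcal F_{j}$ be $r$-sunflower-free. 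If some $\mathcal F_{j}$ does contain an $r$-sunflower $\{C_{1},\dots ,C_{r}\}$ with core $C$, one must produce $S_{1},\dots ,S_{r}\in\mathcal F'$ with $S_{l}\cap V_{j}=C_{l}$ that are sunflower-compatible on every block simultaneously --- simplest would be a common completion $D\subseteq V\setminus V_{j}$ with $D\cup C_{l}\in\mathcal F'$ for all $l$, which gives the $r$-sunflower $\{D\cup C_{l}\}_{l}$ in $\mathcal F$ and contradicts the hypothesis on $\mathcal F$.

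A plain pigeonhole over the sections $\{S\setminus V_{j}:S\cap V_{j}=C_{l}\}$ does not suffice, since their union may be almost all of $\mathcal F'$; I expect the right approach is to apply a robust (spread) version of the sunflower lemma, in the style of Alweiss--Lovett--Wu--Zhang, inside $\mathcal F_{j}$ itself, and to use the Sauer--Shelah bound to keep the relevant conditional families on $V\setminus V_{j}$ under control --- it is here that the VC-dimension hypothesis is really exploited, and this is the step I expect to be the main obstacle. Finally I would check that all constant-order losses are absorbed into the constant $10$, confirm the strong-induction bookkeeping for the small auxiliary values of $s_{j}$, and dispose of small $k$ by the Erd\H{o}s--Rado bound as above.
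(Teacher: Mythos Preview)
Your outline has the right shape --- an induction that drops the uniformity from $k$ to roughly $\log k$ at the cost of a factor $(dr)^{2}$, iterated $\log^{*}k$ times --- but the dimension-reduction lemma is not true as stated, and you yourself locate the failure without repairing it. There is no reason the trace family $\mathcal F_{j}=\{S\cap V_{j}:S\in\mathcal F'\}$ should be $r$-sunflower-free: if $C_{1},\dots,C_{r}\subset V_{j}$ form a sunflower in $\mathcal F_{j}$, the sets $S_{1},\dots,S_{r}\in\mathcal F'$ realising them can intersect arbitrarily outside $V_{j}$, and nothing forces a common completion $D$. (Already for $s_{j}=1$ the trace $\mathcal F_{j}$ is a family of singletons, which contains an $r$-sunflower as soon as $|\mathcal F_{j}|\ge r$; your product bound would then force $|\mathcal F_{j}|\le r-1$ for every such block, an enormous and unjustified constraint.) Your suggestion to invoke an ALWZ-style spread argument inside $\mathcal F_{j}$ together with Sauer--Shelah is not a proof, and I do not see how to make it one: spreadness inside $V_{j}$ says nothing about the fibres over $V\setminus V_{j}$, and Sauer--Shelah only bounds the \emph{number} of traces, not which completions occur. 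With the product inequality $|\mathcal F'|\le\prod_{j}g(s_{j})$ unsupported, the inductive step collapses.

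The paper sidesteps this obstacle by changing the quantity being inducted on. Instead of $f^{d}_{r}(k)$ it works with $\alpha^{d}_{r}(k)$, the minimum over multifamilies $\mathcal F$ of $k$-sets of VC-dimension at most $d$ of the probability that $r$ independent uniform draws from $\mathcal F$ have pairwise equal intersections; since $\alpha^{d}_{r}(k)\le (f^{d}_{r}(k)-1)^{1-r}$, a lower bound on $\alpha$ gives the theorem. The inductive step uses only a \emph{two}-part split: the $s\approx k^{4}/\alpha^{d}_{r}(\log k)$ most frequent ground elements, and the rest. If most sets meet the frequent part in at most $\log k$ elements, then for $r$ random sets the restrictions to the frequent part have pairwise equal intersections with probability at least $\alpha^{d}_{r}(\log k)$ \emph{by definition} --- no need for any trace family to be sunflower-free --- while on the infrequent part the $r$ sets are pairwise disjoint with probability $1-O(k^{4}/s)$ because every element there is rare. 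Otherwise many sets meet the frequent part in more than $\log k$ elements, and Sauer--Shelah on $[s]$ yields a common trace $A$ with $|A|\ge\log k$ shared by a $1/(rs^{d})$-fraction of the family, reducing the uniformity to $k-\log k$. The crucial point is that working with probabilities lets the two pieces combine by intersection of events (sunflower on the heavy side \emph{and} disjointness on the light side), whereas your approach demands good behaviour on all $M$ blocks simultaneously, which is exactly what cannot be arranged.
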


The {\em Littlestone dimension} of $\mathcal F\subseteq 2^V$ is defined as follows. Consider a rooted complete binary tree $T_d$, with the root at level $0$ and with $2^{d}$ leaves at the last level. Let the leaves of $T_d$ be labeled by sets in $\mathcal F$, and all other vertices by elements of $V$. We say that $T_d$ is {\em shattered} by $\mathcal F$ if for every root-to-leaf path with labels $v_0, v_1,\ldots, v_{d-1}, F,$ we have $v_i\in F$ if and only if the $(i+1)$st vertex along the path is the left-child of $v_i$, for all $0\le i<d$. The Littlestone dimension of $\mathcal F$, denoted by $\LS(\mathcal F)$, is the largest $d$ for which there is a labeling of $T_d$ which is shattered by $\mathcal F$.
\smallskip

Obviously, we have $\VC(\mathcal F)\le \LS(\mathcal F)$, because if the $S=\{s_0,\ldots,s_{d-1}\}\subseteq V$ is shattered by $\mathcal F$, then the labeling of $T_d$ in which all vertices at level $i$ are labeled by $s_i$, $0\le s_i<d$, and the leaves by the corresponding sets in $\mathcal F$ with the appropriate intersection with $S$, is also shattered by $\mathcal F$.

Let $h^d_r(k)$ denote the least positive integer $m$ such that every family $\mathcal F$ of $k$-sets with $|\mathcal F|\ge m$ and $\LS(\mathcal F)\le d$ contains an $r$-sunflower.
Since the Littlestone dimension of a set system is at least as large as its VC-dimension, we have
\[h^d_r(k)\le f^d_r(k)\le f_r(k).\]
It turns out that $h^d_r(k)$, as a function of $k$, grows much more slowly than $f^d_r(k)$. Its growth rate is only polynomial in $k$, albeit the degree of this polynomial depends on $d$.

\begin{theorem}\label{littlestone}
For positive integers $d,r,k$, every family of $k$-sets with LS-dimension at most $d$ and cardinality at least $(rk)^{d}$  has an $r$-sunflower. Using our notation, we have
\[h^d_r(k)\le (rk)^{d}.\]

\noindent On the other hand, for integers $d,r \geq 3$, and $k\geq 4d$, we have \[h^{d}_r(k) \geq (rk/d)^{d-o(d)},\]
where the $o(d)$ term goes to $0$ as $d \to \infty$. 
\end{theorem}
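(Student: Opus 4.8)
Write $g(d,k)$ for the largest size of a $k$-uniform family with no $r$-sunflower and $\LS\le d$, and $\mathcal F_{v\in},\mathcal F_{v\notin}$ for the members of $\mathcal F$ containing, resp.\ avoiding, $v$; put $\mathcal F_{v\in}-v=\{F\setminus\{v\}: v\in F\in\mathcal F\}$. The plan for the first (upper) bound is induction on $d$, using three standard facts: (i) [Erd\H os--Rado] if $\mathcal F$ is $k$-uniform with no $r$-sunflower then a maximal pairwise-disjoint subfamily has $<r$ members, so their union $Y$ has $|Y|<rk$ and every member of $\mathcal F$ meets $Y$; (ii) for every $v$, $\min\{\LS(\mathcal F_{v\in}),\LS(\mathcal F_{v\notin})\}\le \LS(\mathcal F)-1$ (if both exceeded $s$, splice their shattered depth-$s$ trees under a new root labelled $v$); (iii) passing to a subfamily, deleting a ground-set element, or taking a link $\mathcal F_{v\in}-v$ never increases $\LS$ and never destroys the ``no $r$-sunflower'' property (a sunflower in $\mathcal F_{v\in}-v$ lifts, by adding $v$, to one in $\mathcal F$).

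For the inductive step let $\mathcal F$ achieve $g(d,k)$. If some $v$ lies in every member, then $\mathcal F-v$ shows $g(d,k)\le g(d,k-1)$. Otherwise: \emph{Case 1}, there is $v$ with $\mathcal F_{v\notin}\ne\emptyset$ and $\LS(\mathcal F_{v\notin})\le d-1$; then $|\mathcal F_{v\notin}|\le g(d-1,k)$, and since every member of $\mathcal F_{v\in}$ contains $v$, $|\mathcal F_{v\in}|=|\mathcal F_{v\in}-v|\le g(d,k-1)$, so $g(d,k)\le g(d,k-1)+g(d-1,k)$. \emph{Case 2}, otherwise every $v$ with $\mathcal F_{v\notin}\ne\emptyset$ has $\LS(\mathcal F_{v\notin})=d$, hence $\LS(\mathcal F_{v\in})\le d-1$ by (ii); combined with the first subcase we may assume $\LS(\mathcal F_{v\in})\le d-1$ for all $v$, and then $\mathcal F=\bigcup_{v\in Y}\mathcal F_{v\in}$ with $Y$ from (i) gives $g(d,k)\le (r-1)k\cdot g(d-1,k)$. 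In all cases $g(d,k)\le\max\{\,g(d,k-1)+g(d-1,k),\ (r-1)k\cdot g(d-1,k)\,\}$; with $g(0,k)=g(d,0)=1$ this unrolls to $g(d,k)\le((r-1)k)^d$, which is $<(rk)^d$, i.e.\ $h^d_r(k)\le(rk)^d$. The one point needing care is checking that the closed-form estimate is preserved by the first branch of the recursion; for $d\le k$ this follows from $((r-1)k)^d-((r-1)(k-1))^d\ge ((r-1)k)^{d-1}$, and for $d>k$ one simply uses the second branch.

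For the second (lower) bound I would build the extremal family recursively. The basic ``swap gadget'': for a family $\mathcal S$ of $d$-element labels set $\mathcal F(\mathcal S)=\{([k]\setminus S)\cup S': S\in\mathcal S\}$ where $S'$ is a disjoint shifted copy of $S$; then $\mathcal F(\mathcal S)$ is $k$-uniform and $\LS(\mathcal F(\mathcal S))=\LS(\mathcal S)$, and one observes that \emph{any $d$-uniform family has $\LS\le d$} (induction on $d$: after deleting the common element, $\mathcal S_{u\in}$ is $(d-1)$-uniform). Taking $\mathcal S=\binom{[k]}{d}$ one checks directly that $\mathcal F(\mathcal S)$ has no $r$-sunflower for $r\ge3$ (a putative sunflower would force the $S$'s to have constant pairwise unions and intersections, impossible for $\ge3$ distinct $d$-sets), giving $h^d_r(k)>\binom kd$. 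To insert the factor $r$ one enriches each swap-coordinate with an $(r-1)$-fold branch (a position $i\in S$ may be sent to any one of $r-1$ shifted copies $\sigma_{i,1},\dots,\sigma_{i,r-1}$); the same ``equal unions/intersections'' analysis, now run by induction on $d$ after peeling off a coordinate on which all $\phi_a$ agree, shows the resulting family still has no $r$-sunflower, and it has $\binom kd(r-1)^d$ members. Finally, a composition lemma glues sunflower-free families on disjoint ground sets (layering the mistake trees shows the Littlestone dimensions add, and the ``equal unions/intersections'' dichotomy shows sunflower-freeness is preserved, while sizes multiply), allowing these pieces to be assembled into a $k$-uniform family with $k\ge4d$.

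The hard part is the Littlestone-dimension accounting for the branched gadget. The ``default'' ground-set elements encoding $[k]\setminus S$ cost an extra unit of dimension beyond $d$ (already for $d=1$ the branched gadget has $\LS=2$), so the naive assembly yields only $(rk/d)^{d-1}$, and the plain product of $d$ one-dimensional sunflower-free blocks only yields $(k/d+r)^d$, which is short of $(rk/d)^{d-o(d)}$ precisely when $k=\Theta(d)$. Thus the crux is to extract the full factor $\approx r$ per coordinate while spending only a $o(d)$ surplus of Littlestone dimension in total — obtained by balancing the coordinate sizes and the number of branched coordinates so that the accumulated overhead is sublinear in $d$, which is exactly where the parameter regime $k\ge 4d$ and the $o(d)$ slack in the exponent are used.
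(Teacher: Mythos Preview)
Your upper-bound argument is essentially the paper's: both combine the Erd\H{o}s--Rado covering observation (a maximal disjoint subfamily has union of size $<rk$ meeting every set) with the Littlestone recursion $\min\{\LS(\mathcal F_{v\in}),\LS(\mathcal F_{v\notin})\}\le\LS(\mathcal F)-1$ to obtain a recurrence of the shape $g(d,k)\le\max\{g(d,k-1)+g(d-1,k),\,(r-1)k\cdot g(d-1,k)\}$, which the ansatz $(rk)^d$ satisfies. The paper phrases the dichotomy on a single popular element (found via the covering lemma) rather than on all of $Y$, and gets $g(d-1,k-1)$ instead of $g(d-1,k)$ in the second branch, but this is cosmetic.

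Your lower bound, however, is not a proof. You correctly build an explicit family of size $\binom{k}{d}$ with $\LS\le d$ and no $3$-sunflower, and you correctly diagnose that inserting the $(r-1)$-fold branches pushes the Littlestone dimension above $d$ (already at $d=1$), so that the naive assemblies yield only $(rk/d)^{d-1}$ or $(k/d+r)^d$. But the final paragraph's ``balancing so that the accumulated overhead is sublinear in $d$'' is an aspiration, not an argument: you never exhibit a family with $\LS\le d$, no $r$-sunflower, and size $(rk/d)^{d-o(d)}$, nor do you indicate what the balancing parameters are or why the Littlestone overhead becomes $o(d)$. The paper takes a completely different route --- a probabilistic construction. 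It chooses $m\approx n^{-1}(n/k)^{d-\lceil\log d\rceil}$ uniformly random $k$-subsets of $[n]$ with $n=\Theta(k^2r/(d\log k))$, and shows by first-moment bounds that with high probability (i) no $r$ of the chosen sets form a sunflower, and (ii) no $d$ of them share a common $(d-\lceil\log d\rceil)$-element subset. Condition (ii) certifies $\LS\le d$: if $\LS(\mathcal F)>d$, then in a witnessing mistake tree, following the left child $d-\lceil\log d\rceil$ times from the root reaches a subtree with at least $2^{\lceil\log d\rceil}\ge d$ leaf-sets, all containing the same $d-\lceil\log d\rceil$ internal labels. This random approach sidesteps entirely the per-coordinate Littlestone accounting that blocks your explicit construction.
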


For several geometrically defined set systems, one can verify the sunflower conjecture by exploring the special properties of the underlying configurations.
\smallskip

A collection $\mathcal{D}$ of Jordan regions in the plane is called a family of \emph{pseudo-disks} if the boundaries of any two members in $\mathcal{D}$ intersect in at most two points. For simplicity, we will assume that $\mathcal{D}$ is in \emph{general position}, that is, no point lies on the boundary of three regions and no two regions are tangent. It is well known that the VC-dimension of the set system obtained by restricting $\mathcal D$ to $V$ is at most $3$ (see \cite{BHP}) and, hence, Theorem~\ref{main} applies. However, in this case, we can verify the sunflower conjecture.

\begin{theorem}\label{disks}
Let $V$ be a planar point set and let $\mathcal{D} = \{D_1,\ldots,D_N\}$ be a family of pseudo-disks such that the size of every set $S_i = D_i\cap V$ is equal to $k$.  If $N \geq (500  + r)^{900k}$, where $r > 2$, then there are $r$ distinct sets $S_{i_1},\ldots,S_{i_r}$ that form an $r$-sunflower.
\end{theorem}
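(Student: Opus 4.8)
The plan is to run the same Erd\H os--Rado-style recursion on the uniformity $k$ that underlies Theorems~\ref{VC1} and~\ref{main}, but to replace the trivial transversal bound $\tau\le(r-1)k$ by a transversal bound for pseudo-disk systems whose size depends only on $r$. Concretely, suppose $\mathcal D'\subseteq\mathcal D$ is a subfamily such that the traces $D\cap V$, $D\in\mathcal D'$, all have some common size $k'\le k$ and no $r$ of them form a sunflower. If $r$ of the traces are pairwise disjoint they already form an $r$-sunflower with empty core; so we may assume the largest pairwise disjoint subfamily of traces has size $\nu\le r-1$. The heart of the argument is then the following geometric statement.

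\medskip
\noindent\textit{Claim.} There is an absolute constant $c_0$ such that whenever $\mathcal D'$ is a family of pseudo-disks no $r$ of whose traces on $V$ are pairwise disjoint, there is a set $T\subseteq V$ with $|T|\le(500+r)^{c_0}$ meeting $D\cap V$ for every $D\in\mathcal D'$. In particular the bound on $|T|$ depends neither on the sizes of the traces nor on $|V|$.
\medskip

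Granting the Claim, Theorem~\ref{disks} follows quickly. By pigeonhole some point $v\in T$ lies in at least $|\mathcal D'|/(500+r)^{c_0}$ of the sets $D\cap V$; restrict to those pseudo-disks and delete $v$ from $V$. Since $D\cap(V\setminus\{v\})=(D\cap V)\setminus\{v\}$, the new family is again a family of pseudo-disk traces (pseudo-disk-ness is a property of the regions, not of the point set), now of uniformity $k'-1$, of size at least $|\mathcal D'|/(500+r)^{c_0}$, and still without an $r$-sunflower, because re-inserting $v$ into a sunflower on $V\setminus\{v\}$ gives one on $V$; moreover $r$ traces that become equal after deleting $v$ would have been $r$ equal sets, i.e.\ an $r$-sunflower, already, so we may keep the traces distinct. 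Iterating $k$ times: starting from $N\ge r\,(500+r)^{c_0 k}$ we reach a family of at least $r$ empty traces, which is an $r$-sunflower. Since $r\,(500+r)^{c_0 k}\le(500+r)^{c_0 k+1}\le(500+r)^{900k}$ once $900\ge c_0+1$, the stated threshold $N\ge(500+r)^{900k}$ suffices.

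To prove the Claim one must use the planarity of pseudo-disk arrangements, not merely their VC-dimension: bounded VC-dimension alone does not force a small transversal, as projective planes are intersecting systems of VC-dimension $2$ with unbounded transversal number. Two routes are available. The first is to invoke a ``piercing number versus matching number'' bound for pseudo-disk hypergraphs --- a family of pseudo-disks no $r$ of whose traces are pairwise disjoint can be pierced by $O(r)$ points --- and then to force the piercing points to lie in $V$ by a weighted $\epsilon$-net argument, using that pseudo-disk systems admit $\epsilon$-nets of size $O(1/\epsilon)$ (a consequence of their linear union complexity). The second route, more in the spirit of Theorem~\ref{VC1} and explaining the shape of the bound, is to split each pseudo-disk boundary at two suitably chosen extreme points into a bounded number of monotone arcs, so that the traces of $\mathcal D'$ are realized as Boolean combinations of a bounded number $t=O(1)$ of set systems, each of VC-dimension $1$, and then to apply the Ding--Seymour--Winkler transversal bound behind Theorem~\ref{VC1} to each part, combining them with one exponentiation lost per part; the constants in the union-complexity and $\epsilon$-net estimates for pseudo-disks are what inflate the final exponent to the lavish value $900$.

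The main obstacle is precisely this Claim, and within it the two non-negotiable features: the piercing set must be drawn from $V$, and its size must be independent of the trace sizes. A transversal of size $O((r-1)k)$ is immediate --- take the union of a maximum pairwise-disjoint subfamily of traces --- but the dependence on $k$ reintroduces a $k!$-type loss and ruins the recursion; and the purely combinatorial $(p,q)$-machinery does not apply, since we have only the weak $(r,2)$-property together with an unbounded ground set. Thus one genuinely needs the topology of pseudo-disks, via either the existing piercing/$\epsilon$-net results for pseudo-disk hypergraphs or a clean decomposition into VC-dimension-$1$ pieces, and carefully tracking constants through that step is the real work.
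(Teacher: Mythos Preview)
Your recursive framework and your Claim are exactly right, and they match the paper: the paper also inducts on $k$, shows the transversal number of the trace system is at most $(500+r)^{900}$, finds a popular vertex $v$, deletes it, and recurses. Where you diverge is in how the Claim is established, and here you have missed the paper's key idea.

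The paper does not decompose pseudo-disks into VC-dimension~$1$ pieces, nor does it go through piercing numbers and $\epsilon$-nets. Instead it observes that the parameter $\lambda(\mathcal F)$ from Section~\ref{sec2} --- the largest $t$ for which there exist $S_1,\dots,S_t\in\mathcal F$ with a ``private'' witness $v_{ij}\in S_i\cap S_j$, $v_{ij}\notin S_\ell$ for $\ell\neq i,j$, for every pair --- is bounded by an absolute constant for pseudo-disk traces. The reason is topological: each witness $v_{ij}$ lies in a face of the arrangement of the boundary curves $C_1,\dots,C_t$ covered by exactly two of the $D_\ell$, so there are at least $\binom{t}{2}$ such depth-$\le 2$ faces; Sharir's bound on the number of depth-$\le k$ vertices in a pseudo-disk arrangement (Lemma~\ref{sharir}) then forces $\binom{t}{2}\le 208t$, i.e.\ $\lambda(\mathcal F)\le 417$. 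With $\lambda$ an absolute constant and $\nu\le r-1$, the Ding--Seymour--Winkler Lemma~\ref{stab} applies \emph{directly} to the trace hypergraph on $V$ and gives $\tau(\mathcal F)\le(500+r)^{900}$. No decomposition, no $\epsilon$-nets, no forcing of piercing points into $V$ --- the transversal is in $V$ automatically because Lemma~\ref{stab} is a purely combinatorial statement about the hypergraph $\mathcal F$ on ground set $V$.

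Your Route~2 is therefore aimed in roughly the right direction --- it is the Ding--Seymour--Winkler bound that does the work --- but the mechanism you propose (splitting boundaries into monotone arcs to get Boolean combinations of VC-dimension~$1$ systems) is not how the paper proceeds and would be awkward to make precise for general pseudo-disks, which need not have any monotone structure. The relevant parameter is $\lambda$, not VC-dimension, and the geometric content is a linear bound on shallow cells, not a structural decomposition. Your Route~1 could conceivably be made to work, but it is a genuinely different and heavier argument than what the paper does.
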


Our paper is organized as follows. Sections~\ref{sec2} and \ref{var} contain the proofs of Theorems~\ref{VC1} and \ref{main}, respectively. Theorem~\ref{littlestone} about set systems of bounded Littlestone dimension is established in Section~\ref{sec4}. Section~\ref{sec5} is devoted to low-dimensional geometric instances of the sunflower conjecture, while the last section contains some concluding remarks.
\smallskip

For the clarity of presentation, throughout this paper we make no attempt to optimize the absolute constants occurring in the statements.

\section{VC-dimension 1--Proof of Theorem~\ref{VC1}}\label{sec2}

Given a family $\mathcal{F}$ of subsets of a ground set $V$, as usual, let $\nu(\mathcal{F})$ denote the {\em packing number} of $\mathcal{F}$, {\em i.e.,} the maximum number of pairwise disjoint members of $\mathcal F$. Also, let $\tau(\mathcal{F})$ be the {\em transversal number} of $\mathcal{F}$, {\em i.e.}, the minimum number of elements that can be selected from $V$ such that every member of $\mathcal F$ contains at least one of them. Finally, let $\lambda(\mathcal{F})$ denote the maximum integer $l$ such that there are $l$ sets $S_1,\ldots, S_l \in \mathcal{F}$ with the property that for any $1 \leq i < j \leq l$, there is $v=v_{ij} \in S_i\cap S_j$ such that $v \not\in S_{t}$ for $t \in [m]\setminus\{i,j\}$.  It is easy to verify that $\lambda(\mathcal{F})$ is at least as large as the VC-dimension of the set system (hypergraph) $\mathcal{F}^*$ {\em dual} to $\mathcal{F}$.
\smallskip

We need the following result of Ding, Seymour, and Winkler \cite{DSW} which bounds the transversal number of $\mathcal{F}$ in terms of its packing number and $\lambda(\mathcal{F})$.

\begin{lemma}[Ding, Seymour, Winkler]\label{stab}
Let $\mathcal{F}$ be a set system with ground set $V$, and let $\nu(\mathcal{F}) = \nu, \tau(\mathcal{F}) = \tau$ and $\lambda(\mathcal{F}) = \lambda$.   Then we have

\[\tau \leq 11\lambda^2(\lambda + \nu + 3)\binom{\lambda + \nu}{\lambda}^2.\]

\end{lemma}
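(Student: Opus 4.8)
The statement is the theorem of Ding, Seymour, and Winkler \cite{DSW}; the plan is to reconstruct their argument, whose skeleton I sketch here. One may assume $\mathcal F$ is finite and $\emptyset\notin\mathcal F$ (otherwise no transversal exists and there is nothing to bound), and---by repeatedly discarding a member $S$ with $\tau(\mathcal F\setminus\{S\})=\tau(\mathcal F)$, which can change neither $\nu$ nor $\lambda$ upward---that $\mathcal F$ is $\tau$-\emph{critical}, i.e.\ deleting any member strictly lowers $\tau$. The proof is then a single two-parameter induction on the pair $(\nu,\lambda)$: a transversal is assembled by interleaving two kinds of ``peeling'' moves, each strictly decreasing one of $\nu,\lambda$ while keeping the other under control, and the claimed bound comes from counting the possible interleavings together with the per-step cost.

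\emph{Peeling $\nu$.} Fix an inclusion-minimal member $e\in\mathcal F$ and split $\mathcal F=\mathcal F_{0}\cup\mathcal F_{1}$ with $\mathcal F_{0}=\{S\in\mathcal F:\,S\cap e=\emptyset\}$ and $\mathcal F_{1}=\{S\in\mathcal F:\,S\cap e\neq\emptyset\}$. Since $e$ is disjoint from every member of $\mathcal F_{0}$, adjoining $e$ to a matching of $\mathcal F_{0}$ enlarges it, so $\nu(\mathcal F_{0})\le\nu-1$, while trivially $\lambda(\mathcal F_{0})\le\lambda$; by induction $\mathcal F_{0}$ has a transversal of the allowed size. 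It remains to cover $\mathcal F_{1}$, i.e.\ to treat the special case in which one distinguished set $e$ meets \emph{every} member of $\mathcal F$.

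\emph{Peeling $\lambda$ (the local problem).} Now one works with the trace $\{S\cap e:\,S\in\mathcal F\}$ on the small ground set $e$. If some element of $e$ lies in all but boundedly many of these traces, put it into the transversal, delete the sets it covers, and iterate. Otherwise the trace has a large ``spread-out'' subfamily, and a Ramsey/Dilworth-type dichotomy on the pairs $\{i,j\}$ of sets in it---each pair is either \emph{separated}, with $S_i\cap S_j=\emptyset$ (which enlarges a matching of $\mathcal F$, so only boundedly many such pairs occur), or \emph{linked} by a private element $v_{ij}\in S_i\cap S_j$ avoiding the other chosen sets ($\lambda$ such links forcing $\lambda(\mathcal F)$ up)---lets one split off such a $v_{ij}$, cover all sets through it, and recurse on a subfamily in which the spread has strictly dropped, $\lambda$ has decreased by one, and the local packing number has not grown. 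Each local step adds $O(\lambda)$ vertices and the local recursion has depth $O(\lambda)$, which accounts for a factor of order $\lambda^{2}$.

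\emph{Bookkeeping and the obstacle.} The two peeling operations define a two-dimensional recursion whose reachable states record how many moves of each type have been made; since at most $\nu$ moves of the first kind and at most $\lambda$ of the second ever occur, these states are indexed by monotone lattice paths in a $\nu\times\lambda$ grid, of which there are $\binom{\lambda+\nu}{\lambda}$, and a careful accounting of the cost of a $\nu$-peeling---which itself has to resolve a local subproblem of comparable complexity---contributes the second factor $\binom{\lambda+\nu}{\lambda}$ and the $(\lambda+\nu+3)$ term; multiplying by the $O(\lambda^{2})$ vertices added at each state yields $\tau\le 11\lambda^{2}(\lambda+\nu+3)\binom{\lambda+\nu}{\lambda}^{2}$. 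The real obstacle lies in the local problem: making quantitative the dichotomy ``some element has high trace-degree'' versus ``there is a large spread-out subfamily,'' and---the delicate point---verifying that splitting off a private element genuinely decreases the controlling parameter while neither $\lambda$ nor the local packing number creeps back up, so that the nested inductions close with only the claimed polynomial-in-$\lambda$ overhead; the rest is routine.
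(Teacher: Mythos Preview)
The paper does not prove this lemma; it is quoted verbatim as a result of Ding, Seymour, and Winkler \cite{DSW} and used as a black box in the proofs of Theorems~\ref{lambda} and~\ref{disks}. So there is no ``paper's own proof'' to compare against, and for the purposes of this paper a one-line citation is all that is expected.

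Your sketch is a reasonable high-level narrative of the Ding--Seymour--Winkler argument, but as written it is not a proof. The two-parameter peeling scheme and the lattice-path bookkeeping that produces $\binom{\lambda+\nu}{\lambda}^2$ are in the right spirit, but several steps are asserted rather than shown: you do not actually establish the dichotomy in the local problem (why a trace with no high-degree element must contain a large subfamily that either forces $\nu$ up or yields a private link), you do not verify that removing a private element genuinely drops the relevant parameter by one without inflating the others, and the accounting that turns ``$O(\lambda)$ vertices at each of $\binom{\lambda+\nu}{\lambda}$ states, squared'' into the precise bound $11\lambda^2(\lambda+\nu+3)\binom{\lambda+\nu}{\lambda}^2$ is entirely absent. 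You also flag these exact points yourself as ``the real obstacle.'' If the goal were to include a self-contained proof, these gaps would each need a full argument; if the goal is to match the paper, simply citing \cite{DSW} suffices.
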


Notice that $\VC(\mathcal{F})=1$ implies that $\lambda(\mathcal{F}) \leq 3$.  Hence, Theorem~\ref{VC1} is an immediate corollary to the following result.

\begin{theorem}\label{lambda}
Let $r \geq 3$ and let $\mathcal{F}$ be a family of $k$-sets with $\lambda(\mathcal{F})=\lambda$ which does not contain an $r$-sunflower.  Then we have $|\mathcal{F}| \leq (\lambda + r)^{6\lambda k}$.

\end{theorem}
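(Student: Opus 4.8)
The plan is to bound $|\mathcal{F}|$ by induction on $k$, using Lemma~\ref{stab} to control the structure of $\mathcal{F}$ at each step. The key observation is that if $\mathcal{F}$ has no $r$-sunflower, then it has no $r$ pairwise disjoint sets (an $r$-matching is a very special $r$-sunflower, with empty common intersection), so $\nu(\mathcal{F}) \le r-1$. Combined with $\lambda(\mathcal{F}) = \lambda$, Lemma~\ref{stab} gives
\[
\tau(\mathcal{F}) \le 11\lambda^2(\lambda + r + 2)\binom{\lambda + r - 1}{\lambda}^2 =: \tau_0,
\]
and one checks $\tau_0 \le (\lambda + r)^{c}$ for a small absolute constant $c$ (something like $c = 3$ or $4$ suffices; I would not optimize). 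So there is a set $T \subseteq V$ with $|T| \le \tau_0$ meeting every member of $\mathcal{F}$.

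**The induction step.**

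Since every $S \in \mathcal{F}$ contains at least one element of $T$, by pigeonhole there is a vertex $v \in T$ contained in at least $|\mathcal{F}|/\tau_0$ members of $\mathcal{F}$. Let $\mathcal{F}_v = \{S \setminus \{v\} : S \in \mathcal{F}, v \in S\}$. This is a family of $(k-1)$-sets, it has size at least $|\mathcal{F}|/\tau_0$, and crucially $\lambda(\mathcal{F}_v) \le \lambda(\mathcal{F}) = \lambda$ (deleting a common element from all sets can only make the "private witness" condition harder to satisfy, so $\lambda$ does not increase; one should double-check the definition here, but this monotonicity is the natural expectation). Moreover $\mathcal{F}_v$ has no $r$-sunflower: if $S_1 \setminus \{v\}, \ldots, S_r \setminus \{v\}$ formed one, then $S_1, \ldots, S_r$ would form an $r$-sunflower in $\mathcal{F}$ (adding the common element $v$ back to each preserves the property that pairwise intersections are all equal). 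Hence by the inductive hypothesis applied with $k-1$,
\[
\frac{|\mathcal{F}|}{\tau_0} \le |\mathcal{F}_v| \le (\lambda + r)^{6\lambda(k-1)},
\]
so $|\mathcal{F}| \le \tau_0 \cdot (\lambda + r)^{6\lambda(k-1)} \le (\lambda+r)^{c}\cdot(\lambda+r)^{6\lambda(k-1)} \le (\lambda+r)^{6\lambda k}$, provided $c \le 6\lambda$, which holds since $\lambda \ge 1$ and $c$ is a small constant. The base case $k = 0$ (or $k = 1$) is trivial: a family of distinct $0$-sets has at most one member, and one only needs $r \ge 3$ here.

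**The main obstacle.**

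The delicate point is verifying that $\lambda(\mathcal{F}_v) \le \lambda(\mathcal{F})$ and, more importantly, making sure the constant bookkeeping closes: the factor $\tau_0$ lost per step accumulates to $\tau_0^k \le (\lambda+r)^{ck}$, and this must be absorbed into the gap between $6\lambda k$ and the exponent $6\lambda(k-1)$ coming out of the induction, i.e. one genuinely has $6\lambda k - 6\lambda(k-1) = 6\lambda \ge c$ of slack at each step — so the choice of the constant $6$ (versus, say, the naive bound from $\tau_0$ alone) is exactly what is needed for the recursion to be self-sustaining. I expect the bulk of the write-up to be the elementary estimate bounding $11\lambda^2(\lambda+r+2)\binom{\lambda+r-1}{\lambda}^2$ by $(\lambda+r)^{c}$ with $c$ small enough, using $\binom{\lambda+r-1}{\lambda} \le (\lambda+r)^{\lambda}$ crudely and checking the low-order factors — routine but the place where one must be slightly careful.
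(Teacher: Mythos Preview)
Your approach is essentially identical to the paper's: induct on $k$, use $\nu(\mathcal{F})\le r-1$ together with Lemma~\ref{stab} to bound $\tau(\mathcal{F})$, pigeonhole onto a popular vertex $v$, delete $v$, and recurse. The one slip is your claim that $\tau_0 \le (\lambda+r)^c$ for an \emph{absolute} constant $c$ like $3$ or $4$: this is false, since $\binom{\lambda+r-1}{\lambda}^2$ already contributes an exponent of order $2\lambda$. Your own final paragraph implicitly corrects this (you write $\binom{\lambda+r-1}{\lambda}\le (\lambda+r)^{\lambda}$), and the paper makes it explicit, obtaining $\tau(\mathcal{F})\le 20(\lambda+r)^{2\lambda+1}$; the recursion then closes because $2\lambda+1+\log_{\lambda+r}20 \le 6\lambda$ for $\lambda\ge 1$, $r\ge 3$. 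So just replace ``$c$ is a small absolute constant'' with ``$c=2\lambda+O(1)$'' throughout and the argument is the paper's.
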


\begin{proof}
We proceed by induction on $k$. The base case $k=1$ follows from the trivial bound $|\mathcal{F}| \leq r-1$. The induction hypothesis is that the bound holds for families of $(k-1)$-sets. For the inductive step, let $\mathcal{F}\subseteq 2^V$ be a family of $k$-sets with no $r$-sunflower. In particular, $\mathcal F$ has no $r$ disjoint members, so that $\nu(\mathcal{F}) < r$.  By Lemma \ref{stab},
\begin{equation}\nonumber
\begin{array}{lcl}
\tau(\mathcal{F}) &\leq&  11\lambda^2(\lambda + r + 3)\binom{\lambda + r}{\lambda}^2\leq 11\lambda^2(\lambda + r + 3)(\lambda+r)^{2\lambda}(\lambda!)^{-2}\\
&\leq& 11(\lambda + r + 3)(\lambda+r)^{2\lambda} \leq 20(\lambda+r)^{2\lambda+1}.
\end{array}
\end{equation}
\noindent Therefore, there is $v\in V$ incident to at least
$|\mathcal{F}|/\tau(\mathcal{F}) \geq |\mathcal{F}|/\left(20(\lambda + r)^{2\lambda + 1}\right)$ members of $\mathcal{F}$.

Let $\mathcal{F}' = \{S\setminus \{v\}: S \in \mathcal{F}, v \in S\}$. Then we have  $|\mathcal{F}'| \geq |\mathcal{F}|/\left(20(\lambda+r)^{2\lambda+1}\right)$,
$\lambda(\mathcal{F}') \leq \lambda(\mathcal{F})$, and $\mathcal{F}' $ does not contain any $r$-sunflower. By the induction hypothesis, we have $|\mathcal{F}'| \leq (\lambda + r)^{6\lambda(k -1)}$. Thus, we obtain
\[|\mathcal{F}| \leq 20(\lambda + r)^{2\lambda + 1}|\mathcal{F}'| \leq 20(\lambda + r)^{2\lambda + 1} (\lambda + r)^{6\lambda(k -1)} \leq  (\lambda + r)^{6\lambda k },\] as required.\end{proof}

\section{Bounded VC-dimension--Proof of Theorem \ref{main}}\label{var}

In this section, we prove Theorem \ref{main}, which is the main result of this paper. We need the following lemma due to Sauer \cite{Sa}, Shelah \cite{Sh}, Perles, and, in a slightly weaker form, to Vapnik and Chervonenkis \cite{VC}. See also \cite{pa, F}.

\begin{lemma}[Sauer, Shelah, Perles]\label{ss}
Let $\mathcal{F}$ be a set system with ground set $V$ and VC-dimension at most $d$.  Then we have $|\mathcal{F}| \leq \sum_{i = 0}^d \binom{|V|}{i}$.
\end{lemma}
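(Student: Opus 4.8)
\textbf{Proof plan for Lemma~\ref{ss} (Sauer–Shelah–Perles).}

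The plan is to prove the bound $|\mathcal F|\le\sum_{i=0}^d\binom{|V|}{i}$ by the standard \emph{shifting (compression) argument}, which is the cleanest of the several known proofs and needs no external input. Write $n=|V|$ and fix an ordering $v_1,\dots,v_n$ of the ground set. For each $j$, define the down-shift operator $\sigma_j$ acting on a single set $A\in\mathcal F$ by $\sigma_j(A)=A\setminus\{v_j\}$ if $v_j\in A$ and $A\setminus\{v_j\}\notin\mathcal F$ (after the partial shift so far), and $\sigma_j(A)=A$ otherwise; then extend $\sigma_j$ to the whole family by applying it simultaneously, which is well-defined and injective, so $|\sigma_j(\mathcal F)|=|\mathcal F|$. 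Apply $\sigma_1,\dots,\sigma_n$ in turn to reach a family $\mathcal F^*$ that is \emph{down-closed} (a down-set / simplicial complex): whenever $A\in\mathcal F^*$ and $B\subseteq A$, also $B\in\mathcal F^*$. The first key step is to verify this stabilization: repeated shifting strictly decreases $\sum_{A}|A|$ until no shift changes anything, and a family fixed by every $\sigma_j$ is down-closed.

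The second key step is to show that shifting does not increase the VC-dimension, i.e.\ $\VC(\sigma_j(\mathcal F))\le\VC(\mathcal F)$; equivalently, if a set $S$ is shattered by $\sigma_j(\mathcal F)$ then it is shattered by $\mathcal F$. This is the only place requiring a small case analysis: if $v_j\notin S$ the claim is immediate since membership in $S$ is untouched; if $v_j\in S$, one argues that for each $B\subseteq S$ a witness for $B$ in $\sigma_j(\mathcal F)$ can be traced back to a witness in $\mathcal F$, using that $\sigma_j$ only removes $v_j$ and only when the smaller set was not already present. I expect \emph{this monotonicity of VC-dimension under shifting to be the main obstacle}, in the sense that it is the one step where a genuine (if short) argument is needed rather than bookkeeping.

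Finally, once $\mathcal F^*$ is down-closed and still has VC-dimension at most $d$, the conclusion is immediate: every member $A\in\mathcal F^*$ is itself shattered by $\mathcal F^*$ (all its subsets lie in $\mathcal F^*$), so $|A|\le d$ for every $A\in\mathcal F^*$. Hence $\mathcal F^*$ is a family of sets of size at most $d$ in an $n$-element ground set, giving $|\mathcal F|=|\mathcal F^*|\le\sum_{i=0}^d\binom{n}{i}=\sum_{i=0}^d\binom{|V|}{i}$. I would present the argument in this order (shift to a down-set, check VC-dimension is non-increasing, read off the bound), and remark that an equivalent route is the direct double induction on $|V|$ and $d$ using the trace $\mathcal F_{v}=\{A\setminus\{v\}:A\in\mathcal F\}$ together with the "link'' $\{A: v\notin A,\ A\cup\{v\}\in\mathcal F\}$, whose VC-dimension is at most $d-1$; either proof is self-contained.
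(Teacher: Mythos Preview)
Your proof is correct and is one of the standard proofs of the Sauer--Shelah--Perles lemma; in fact the compression argument you outline is essentially the proof given by Frankl in~\cite{F}, one of the references the paper cites. However, the paper does not prove Lemma~\ref{ss} at all: it is stated as a known result with attribution to Sauer, Shelah, Perles (and, in weaker form, Vapnik--Chervonenkis), and simply invoked later in the proof of Theorem~\ref{prob}. So there is no ``paper's own proof'' to compare against; your write-up supplies a self-contained argument where the paper is content to quote the literature.
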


Before turning to the proof, we need to discuss some closely related variants of the sunflower problem.
\smallskip

First, we could ask the same question for {\em multifamilies} of sets, that is, for collections of not necessarily distinct sets. Let $g_r(k)$ be the minimum positive integer $m$ such that every multifamily of $k$-sets of size $m$ contains an $r$-sunflower. It is an easy exercise to prove that $g_r(k)=(r-1)f_r(k)+1$.

Analogously, for any $d\ge 1$, let $g^d_r(k)$ be the minimum positive integer $m$ such that every multifamily of $k$-sets of size $m$ with {\em VC-dimension at most $d$} contains an $r$-sunflower. We similarly have $g^d_r(k)=(r-1)f^d_r(k)+1$.
\smallskip

To obtain upper bounds for $g^d_r(k)$ and $f^d_r(k)$, we define the following related function.  Let $\alpha^d_r(k)$ denote the maximum $\alpha$ such that for every nonempty multifamily $\mathcal{F}$ of $k$-sets with VC-dimension at most $d$, if we select $r$ members uniformly at random from $\mathcal{F}$ with replacement, the probability that they have pairwise equal intersections is at least $\alpha$.

Next, notice that the value of $f_r(k)$ remains the same if we change the definition from families of $k$-sets to families of sets with {\em at most} $k$ elements. Indeed, this can be achieved by adding distinct ``dummy'' vertices to each set of size smaller than $k$ so that it will have size exactly $k$. The same holds for the functions $f^d_r(k), g_r(k),$ $g^d_r(k)$, and $\alpha^d_r(k)$ because adding dummy vertices does not affect the VC-dimension of the family.
\smallskip

Considering a family of VC-dimension $d$ which consists of $f^d_r(k)-1$ sets of size $k$ and contains no $r$-sunflower, we immediately obtain the following upper bound on $\alpha^d_r(k)$ as the $r$-tuples of sets from the family that have pairwise equal intersections are those that consist of the same set $r$ times. 
\begin{equation}\label{upper}
 \alpha^d_r(k) \leq (f^d_r(k)-1)^{1-r}.
\end{equation}

The following lemma implies that this bound on $\alpha^d_r(k)$ is tight within a factor $er^{r-1}$.

\begin{lemma}\label{small1}
For integers $d,k,r \geq 2$ we have
\[\alpha^d_r(k) \geq g^d_r(k)^{1-r}/e.\]
\end{lemma}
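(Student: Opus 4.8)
The plan is to relate $\alpha^d_r(k)$ to $g^d_r(k)$ by showing that any multifamily $\mathcal{F}$ of $k$-sets with VC-dimension at most $d$ contains a large ``sunflower-free core'' or, alternatively, by a direct counting argument. Set $g = g^d_r(k)$. First I would observe that the quantity we want to lower-bound, namely the probability that $r$ independent uniform samples from $\mathcal{F}$ have pairwise equal intersections, is at least the probability that all $r$ samples are equal to one single fixed set $S\in\mathcal F$, summed over $S$; if $\mathcal F$ has $m$ members (with multiplicity) and a set $S$ occurs with multiplicity $m_S$, this contributes $\sum_S (m_S/m)^r$. By convexity (power-mean inequality), this is minimized when the multiplicities are as equal as possible, so the trivial bound from ``all equal'' alone is roughly $(\text{number of distinct sets})^{1-r}$, which is not good enough on its own — we need to also exploit genuine sunflowers.

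The key step is therefore a greedy/iterative extraction: since $|\mathcal F| \ge g^d_r(k)$ forces an $r$-sunflower, I would repeatedly pull out $r$-sunflowers. More precisely, I would argue that inside $\mathcal F$ we can find many disjoint $r$-sunflowers, or — cleaner for the probability bound — that a random $r$-tuple lands in a common sunflower with decent probability. The standard trick: partition (greedily) the multifamily into groups, each of which is an $r$-sunflower, plus a remainder of size less than $g^d_r(k)$. If $\mathcal F$ has $m$ members, we get at least $\lfloor (m - g+1)/r\rfloor$ disjoint $r$-sunflowers, but to get the clean bound $g^{1-r}/e$ it is better to WLOG take $|\mathcal F|$ to be a single block: since $\alpha^d_r(k)$ is an infimum over all nonempty multifamilies, and passing to a sub-multifamily only decreases VC-dimension, it suffices to analyze multifamilies of size exactly $g-1$ (sunflower-free, worst case), OR size exactly $g$ (guaranteed one sunflower). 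I would take $\mathcal F$ with $|\mathcal F| = N$ for the value $N$ achieving the infimum (or approaching it), and note we may assume $N \le$ something like $r(g-1)$ by discarding: if $N$ is huge we can split into many sunflowers and the probability only goes up. Concretely, write $N = q(g-1) + s$ with $0 \le s < g-1$; greedily remove sunflower-free chunks... actually the cleanest route: any multifamily of size $\ge g$ contains an $r$-sunflower $\{S_1,\dots,S_r\}$; the probability a random $r$-tuple equals $(S_1,\dots,S_r)$ in order is $(1/N)^r \cdot r!$, hmm, that's $r!/N^r$ which is too small.

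Let me instead lower-bound via a single sunflower more carefully: pick one $r$-sunflower $T_1,\dots,T_r \in \mathcal F$ (exists since $N\ge g$), and consider the event that the random $r$-tuple, as a \emph{set}, equals $\{T_1,\dots,T_r\}$; but equal intersections is preserved under any re-selection among $\{T_1,\dots,T_r\}$ only if repeats also work — actually if we sample with replacement and happen to pick the $r$-tuple to be any sequence drawn from $\{T_1,\dots,T_r\}$, pairwise intersections are all equal (sunflower property handles distinct pairs; repeated picks give $T_i \cap T_i = T_i \ne$ core in general, so repeats break it). So I would instead count ordered $r$-tuples of \emph{distinct} elements from a sunflower: there are $r!$ such per sunflower. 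With $N$ members and the ability to extract $\lfloor N/g \rfloor$ \emph{disjoint} sunflowers (greedily: as long as $\ge g$ members remain, pull a sunflower, which uses $r \le g$ of them — wait it uses $r$ members but we need $g$ present; we can continue while $\ge g$ remain, removing $r$ each time, giving $\ge (N-g)/r$ sunflowers), the number of good ordered $r$-tuples is $\ge r! \cdot (N-g)/r = (r-1)!(N-g)$, so $\alpha \ge (r-1)!(N-g)/N^r$. Optimizing over $N \ge g$: the function $(N-g)/N^r$ is maximized around $N = gr/(r-1)$, giving value $\approx (r-1)^{r-1}/(g^{r-1} r^r) = \frac{1}{g^{r-1}} \cdot \frac{(r-1)^{r-1}}{r^r}$, and then $(r-1)! \cdot \frac{(r-1)^{r-1}}{r^r} \ge \frac{1}{e} \cdot \frac{(r-1)!\,(r-1)^{r-1}}{r^{r}}$... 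I'd need $(r-1)! (r-1)^{r-1}/r^{r} \ge 1/e$; since $(r-1)! \ge ((r-1)/e)^{r-1}$, this is $\ge (r-1)^{2(r-1)}/(e^{r-1} r^{r})$ which is not obviously $\ge 1/e$ for large $r$. So the pure ``distinct tuples from disjoint sunflowers'' bound is slightly too weak, and the \emph{main obstacle} is squeezing out the constant $1/e$: I expect one must combine the ``all-equal'' contribution $\sum_S (m_S/N)^r$ (good when sets repeat) with the ``distinct-from-a-sunflower'' contribution (good when sets are distinct, forcing sunflowers by size $\ge g$), trading off via the observation that if few sets are repeated then there are many distinct sets hence $\gg g$ of them hence many sunflowers. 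The cleanest writeup: WLOG $\mathcal F$ has all distinct members (repeats only help the all-equal term; if some set repeats $\ge$ enough times the all-equal term alone beats $g^{1-r}/e$), so $N$ equals the number of distinct $k$-sets; if $N < g$ then... $N\ge 1$ and $\alpha \ge (1/N)^r \ge$? no. I think the right statement is: either $N \le (r-1)(g-1)$-ish and the all-equal term $\ge N \cdot (1/N)^r = N^{1-r} \ge ((r-1)g)^{1-r}$, hmm still off by $(r-1)^{r-1}$. Given the target $g^{1-r}/e$ I would ultimately present it as: extract disjoint $r$-sunflowers greedily to get $t \ge (N-g+1)/r$ of them when $N \ge g$ (and handle $N<g$ by the all-equal bound $N^{1-r}\ge (g-1)^{1-r} > g^{1-r}/e$ trivially since $N \le g-1$), each contributing $r!$ ordered distinct good tuples, plus the $N$ all-equal tuples; so $\alpha \ge (N + r!\,t)/N^r \ge (N + (r-1)!(N-g+1))/N^r$, and then a one-line calculus optimization over $N$ delivers $\ge g^{1-r}/e$ — the verification of that final inequality (using $(r-1)! \ge ((r-1)/e)^{r-1}$ and choosing $N \approx rg/(r-1)$ or just $N = (r-1)g$) is the routine-but-fiddly part I would defer, and it is also the step most likely to need the precise constant-chasing the authors warn they are not optimizing.

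\medskip

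\noindent\emph{Summary of the plan.} Reduce to a single multifamily $\mathcal F$ of distinct $k$-sets of VC-dimension $\le d$ realizing (near-)$\alpha^d_r(k)$, with $|\mathcal F| = N$. If $N < g^d_r(k)$, the ``all samples equal'' event already gives $\alpha \ge N^{1-r} \ge (g^d_r(k)-1)^{1-r} \ge g^d_r(k)^{1-r}/e$. If $N \ge g^d_r(k)$, greedily remove $r$-sunflowers (each removal guaranteed while $\ge g^d_r(k)$ members remain) to obtain $\ge (N - g^d_r(k)+1)/r$ pairwise-disjoint $r$-sunflowers; each yields $r!$ ordered $r$-tuples of distinct members with pairwise-equal intersections, and the $N$ constant tuples give the rest, so
\[
\alpha^d_r(k) \;\ge\; \frac{N + r!\cdot \frac{N - g^d_r(k)+1}{r}}{N^{r}} \;=\; \frac{N + (r-1)!\,(N-g^d_r(k)+1)}{N^{r}}.
\]
Finally, minimize the right-hand side over $N \ge g^d_r(k)$ (the minimum is attained at a small multiple of $g^d_r(k)$), and check that the resulting value is at least $g^d_r(k)^{1-r}/e$ using $(r-1)! \ge \left(\tfrac{r-1}{e}\right)^{r-1}$. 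The main obstacle is this last constant-optimization: getting exactly the factor $1/e$ rather than a worse constant, which is the only place the argument is delicate.
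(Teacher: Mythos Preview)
Your overall framing---count ordered $r$-tuples with pairwise equal intersections, separate the ``all samples equal'' contribution from the ``genuine sunflower'' contribution, and handle small and large $N=|\mathcal F|$ separately---matches the paper's, and the case $N<g:=g^d_r(k)$ is fine. The gap is in the case $N\ge g$. Greedy extraction of pairwise-disjoint $r$-sunflowers yields only about $N/r$ of them, so your displayed lower bound
\[
\frac{N+(r-1)!\,(N-g+1)}{N^{r}}
\]
is of order $N^{1-r}$ and tends to $0$ as $N\to\infty$. Hence the infimum over $N\ge g$ is $0$, not a positive value attained ``at a small multiple of $g$'' as you assert; already for $r=3$ and $N=4g$ your bound is about $5/(32g^{2})<g^{-2}/e$. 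You cannot sidestep large $N$: families of $N$ pairwise disjoint $k$-sets have VC-dimension $1$ and $N$ can be arbitrary. Your own remark that ``if $N$ is huge we can split into many sunflowers and the probability only goes up'' is exactly right---in the disjoint example there are $\binom{N}{r}$ sunflowers, not $N/r$---but disjoint greedy extraction misses almost all of them.

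The missing ingredient is an amplification step in the spirit of the crossing lemma. Writing $S^d_r(m,k)$ for the minimum number of $r$-sunflowers in any such multifamily of size $m$, the deletion argument you use gives the linear bound $S^d_r(m,k)\ge m-g+1$. The extra observation the paper makes is that $S^d_r(m,k)/\binom{m}{r}$ is nondecreasing in $m$, which follows by averaging over all size-$m$ subfamilies of a size-$M$ family. Applying this monotonicity with $m_0=(1+1/r)g-1$ (so that $S^d_r(m_0,k)\ge g/r$) upgrades the linear bound to
\[
S^d_r(M,k)\;\ge\;\frac{1}{er}\,g^{1-r}M^{r}\qquad\text{for all }M\ge m_0,
\]
which is of the correct order $M^{r}$ and gives $\alpha^d_r(M,k)\ge S^d_r(M,k)/\binom{M}{r}\ge (r-1)!\,g^{1-r}/e\ge g^{1-r}/e$, uniformly in $M$. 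This averaging step is precisely what your plan lacks; once it is in place, the ``fiddly constant-chasing'' you worried about disappears, since the factor $(r-1)!\ge 1$ already clears the target.
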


\begin{proof}

Let $S^d_r(m,k)$ denote the minimum possible number of $r$-sunflowers in a multifamily $\mathcal{F}$ of at most $k$-element sets with cardinality $m$ and VC-dimension at most $d$. From the definition, if $m<g^d_r(k)$, then $S^d_r(m,k)=0$, while if $m \geq g^d_r(k)$, then $S^d_r(m,k) \geq 1$.

Our argument is based on the proof technique used to obtain the ``crossing lemma'' \cite{AjCNS}, see also \cite{SmS}. The idea is to use an averaging (or, equivalently, probabilistic) argument to amplify a weak bound to a better bound. By deleting one set from each $r$-sunflower, we get the trivial bound $S^d_r(m,k) \geq m-g^d_r(k)+1$. For $M \geq m$, by averaging over all subfamilies of size $m$, we obtain 
\[S^d_r(M,k) \geq S^d_r(m,k)\binom{M}{r}/\binom{m}{r}.\] In particular, $S^d_r(m,k)/\binom{m}{r}$ is a monotone increasing function of $m$. Set $m_0=(1+1/r)g^d_r(k)-1$. Then we have $S^d_r(m_0,k) \geq m_0-g^d_r(k)+1 = g^d_r(k)/r$. Thus, for $m \geq m_0$, we have
\begin{eqnarray}\nonumber S^d_r(m,k) & \geq & S^d_r(m_0,k)\binom{m}{r}/\binom{m_0}{r} \\ & \geq & \label{mm}  \frac{1}{r}g^d_r(k)\binom{m}{r}/\binom{(1+1/r)g^d_r(k)}{r} \\ & \geq & \nonumber \frac{1}{er}g^d_r(k)^{1-r}m^r.
\end{eqnarray}

Let $\alpha^d_r(m,k)$ be the maximum $\alpha$ with the property that for every multifamily $\mathcal{F}$ of at most $k$-element sets with cardinality $m$ and VC-dimension at most $d$, if we uniformly at random choose $r$ sets from $\mathcal{F}$ with replacement, the probability that they have pairwise equal intersections is at least $\alpha$. Thus,  \begin{equation}\label{nextbound} \alpha^d_r(m,k) \geq S^d_r(m,k)/\binom{m}{r}+m^{1-r},\end{equation}
where the first term comes from possibly choosing $r$ different sets (in terms of label, if we view the $m$ not necessarily distinct sets as labeled from $1$ to $m$), and the second term comes from possibly choosing the same set $r$ times.
\smallskip

For $m \geq m_0$, by using (\ref{nextbound}) and then (\ref{mm}), we have \[\alpha^d_r(m,k) \geq S^d_r(m,k)/\binom{m}{r} \geq r!S^d_r(m,k)m^{-r} \geq (r-1)!g^d_r(k)^{1-r}/e.\]
For $m<m_0$, using the trivial bound $S^d_r(m,k) \geq 0$, we have \[\alpha^d_r(m,k) \geq m^{1-r} > m_0^{1-r} = \left((1+1/r)g^d_r(k)-1\right)^{1-r} \geq g^d_r(k)^{1-r}/e.\]
As $\alpha^d_r(k)=\inf_m \alpha^d_r(m,k)$, we have the desired bound $\alpha^d_r(k) \geq g^d_r(k)^{1-r}/e$. \end{proof}

Combining the previous lemma with the Erd\H{o}s-Rado bound $f_r(k) \leq  k! (r-1)^k$, and the inequality $g^d_r(k) \leq g_r(k) = (k-1)f_r(k)+1$, we obtain the following corollary.

\begin{corollary}\label{small}
For any integers $d,k,r \geq 2$, we have
\[\alpha^d_r(k) \geq \left(k!(r-1)^{k+1}+1\right)^{1-r}/e.\]
\end{corollary}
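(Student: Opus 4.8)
The plan is to chain the bound from Lemma~\ref{small1} with the classical Erd\H{o}s--Rado estimate. First I would invoke Lemma~\ref{small1}, which gives $\alpha^d_r(k) \geq g^d_r(k)^{1-r}/e$ for all $d,k,r \geq 2$. Since $1-r \leq -1 < 0$, the map $x \mapsto x^{1-r}$ is decreasing on the positive reals, so it suffices to produce an upper bound on $g^d_r(k)$: any estimate $g^d_r(k) \leq N$ immediately yields $\alpha^d_r(k) \geq N^{1-r}/e$.

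To bound $g^d_r(k)$ from above, I would simply drop the VC-dimension restriction — every multifamily of $k$-sets of VC-dimension at most $d$ is in particular a multifamily of $k$-sets — so that $g^d_r(k) \leq g_r(k)$. Combining the elementary identity $g_r(k) = (r-1)f_r(k)+1$ recorded above with the Erd\H{o}s--Rado bound $f_r(k) \leq k!(r-1)^k$ then gives
\[
g^d_r(k) \leq (r-1)f_r(k) + 1 \leq (r-1)\cdot k!(r-1)^k + 1 = k!(r-1)^{k+1} + 1.
\]

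Substituting this into the inequality from the first step yields
\[
\alpha^d_r(k) \geq g^d_r(k)^{1-r}/e \geq \left(k!(r-1)^{k+1} + 1\right)^{1-r}/e,
\]
which is exactly the claimed bound. Every ingredient is already in hand — Lemma~\ref{small1}, the trivial computation of $g_r(k)$, and the Erd\H{o}s--Rado inequality — so there is essentially no obstacle here; the only point that warrants care is the direction of the inequality when passing through the negative exponent $1-r$, which I would flag explicitly so that the monotonicity step is not misapplied.
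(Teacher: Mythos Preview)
Your proposal is correct and follows exactly the paper's own argument: apply Lemma~\ref{small1}, use $g^d_r(k) \leq g_r(k) = (r-1)f_r(k)+1$, and plug in the Erd\H{o}s--Rado bound $f_r(k) \leq k!(r-1)^k$. The paper actually writes ``$(k-1)f_r(k)+1$'' in the sentence preceding the corollary, but that is a typo for $(r-1)f_r(k)+1$, which you have correctly.
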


We are now in a position to prove the following result which, together with (\ref{upper}), immediately implies Theorem \ref{main}.

\begin{theorem}\label{prob}

For any $d, k, r\geq 2$, we have

\[\alpha^d_r(k) \geq 2^{-10k(dr)^{2\log^{\ast}k}}.\]

\end{theorem}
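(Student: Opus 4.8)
The plan is to prove the bound $\alpha^d_r(k) \geq 2^{-10k(dr)^{2\log^*k}}$ by induction on $k$, where the iterated logarithm in the exponent suggests that the inductive step should reduce a family of $k$-sets to a family of roughly $\log k$-sets (or $O(\log k)$-sets), so that the recursion depth is $\log^* k$. At each step the ``cost'' paid should be at most a factor that is polynomial in $dr$ raised to a power linear in $k$, so that after $\log^* k$ iterations the accumulated cost is $(dr)^{O(k \log^* k)}$, i.e. of the claimed form. The base case is handled by Corollary~\ref{small}, which gives $\alpha^d_r(k) \geq (k!(r-1)^{k+1}+1)^{1-r}/e$; for $k$ bounded (say $k \le 2$, or whatever the iteration bottoms out at) this is already of the required magnitude.

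For the inductive step, let $\mathcal F$ be a nonempty multifamily of $k$-sets with $\VC(\mathcal F) \le d$. First I would apply the Sauer--Shelah--Perles lemma (Lemma~\ref{ss}) to control the number of distinct traces of $\mathcal F$ on its ground set; combined with the observation that a multifamily with many sets but few distinct sets automatically contains many repetitions (hence a sunflower of equal sets, boosting $\alpha$), I may assume $|\mathcal F|$ is not too large relative to $|V|$. The key maneuver is an \emph{encoding / dimension-reduction} step: I want to associate to each $k$-set $S \in \mathcal F$ a much shorter ``signature'' $\sigma(S)$ — a set of size roughly $\ell := C\log k$ for a suitable constant — living in a new ground set, in such a way that (i) the resulting family of signatures still has VC-dimension bounded by a polynomial in $d$ (likely $O(d)$ or $O(d^2)$), and (ii) an $r$-sunflower among the signatures, together with agreement on a bounded amount of additional data, pulls back to an $r$-sunflower in $\mathcal F$. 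A natural way to build such signatures: pick a random subsample of the ground set $V$ of size $\sim \log k$ per element of some partition, or more likely partition the ``coordinates'' of the sets into blocks and use the VC-dimension to argue that within each block the number of distinct patterns is polynomially bounded, so each block can be recoded by $O(\log k)$ bits. Then $\alpha^d_r(k)$ for the original family is bounded below by the probability that $r$ random sets agree block-by-block, which factors (or telescopes) into a product of contributions each governed by $\alpha^{O(d)}_r(O(\log k))$, yielding a recursion of the shape
\[
\alpha^d_r(k) \;\ge\; \bigl(\alpha^{d'}_r(\ell)\bigr)^{k/\ell} \cdot (dr)^{-O(k)},
\]
with $d' = \mathrm{poly}(d)$ and $\ell = O(\log k)$. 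Unwinding this recursion of depth $\log^* k$ gives the stated bound, the $(dr)^{2\log^* k}$ in the exponent coming from the product of the per-level polynomial losses in $d$ and $r$.

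The main obstacle I anticipate is step (i) of the reduction: controlling the VC-dimension of the recoded (signature) family. Shrinking the ground set or partitioning coordinates is easy; ensuring that the new set system one feeds into the inductive hypothesis genuinely has bounded VC-dimension — and with a bound that grows only polynomially (not, say, exponentially) per level, since it gets iterated $\log^* k$ times — is delicate, because arbitrary relabelings and restrictions of a bounded-VC family need not remain bounded-VC unless the encoding is chosen with care. I expect the resolution to hinge on choosing the encoding to be itself ``low-complexity'' (e.g.\ each signature coordinate is a Boolean combination of a bounded number of original membership predicates, so a shattered set in the new family pulls back to a shattered-up-to-bounded-blowup set in the old one), and on the Sauer--Shelah bound to keep the alphabet sizes polynomial. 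A secondary technical point is making the probabilistic ``agree block-by-block'' argument lose only a $(dr)^{O(k)}$ factor rather than something super-exponential in $k$; here the amplification idea already used in Lemma~\ref{small1} (crossing-lemma-style averaging) together with Corollary~\ref{small} applied within each block should suffice, but the bookkeeping of constants — which the authors explicitly decline to optimize — will be the bulk of the work.
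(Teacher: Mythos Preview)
Your high-level framework is right: induction on $k$ with base case via Corollary~\ref{small}, and a step that drops from $k$ to roughly $\log k$. But the concrete reduction you sketch --- encoding each $k$-set by a $\log k$-sized ``signature'' via a block partition of the ground set and then arguing block-by-block --- does not work, and is not what the paper does. The immediate obstruction is that no partition of the ground set into $k/\log k$ blocks will force every $k$-set to meet each block in $O(\log k)$ elements; a set can concentrate in a single block. So the quantity you would feed into the inductive hypothesis is not $\alpha^d_r(\log k)$, and the recursion $\alpha^d_r(k)\ge (\alpha^{d'}_r(\ell))^{k/\ell}(dr)^{-O(k)}$ is unfounded. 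Your main declared worry, that the VC-dimension of the recoded family might blow up, is a red herring: in the correct argument one only ever passes to subfamilies and restrictions of $\mathcal F$, so the VC-dimension stays $\le d$ throughout and no $d'=\mathrm{poly}(d)$ ever appears.

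The idea you are missing is a \emph{frequency-based} head/tail split rather than a block partition. Order the ground set so that $\epsilon_1\ge\epsilon_2\ge\cdots$ are the containment frequencies; since $\sum_i\epsilon_i\le k$ one has $\epsilon_s\le k/s$. Pick $s\approx k^4/\alpha^d_r(\log k)$ and split $\mathcal F$ according to whether $|S\cap[s]|\le\log k$ or not. If most sets have small head, then for $r$ random sets the heads (size $\le\log k$, VC-dimension $\le d$) have pairwise equal intersection with probability $\ge\alpha^d_r(\log k)$, while the tails live among elements of frequency $\le k/s$ and are pairwise \emph{disjoint} with probability $\ge 1-\binom{r}{2}k^2/s$; a union bound gives $\alpha^d_r(k)\ge\tfrac18\alpha^d_r(\log k)$ --- note the loss is only a constant, not a $(k/\log k)$th power. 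Otherwise at least $|\mathcal F|/r$ sets have head size $>\log k$; now Sauer--Shelah bounds the number of distinct heads by $s^d$, so pigeonhole gives $\ge |\mathcal F|/(rs^d)$ sets with a common head $A$ of size $\ge\log k$, and one recurses on the tails of size $\le k-\log k$. This yields $\alpha^d_r(k)\ge (rs^d)^{-r}\alpha^d_r(k-\log k)$, which unwinds with the Case~1 bound to give the stated estimate.
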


\begin{proof}   If $r = 2$, then we have $\alpha_2^d(k) = 1$ and the result follows.  Therefore we can assume $r \geq 3$. We use induction on $k$. For the base cases  $k < 8$, by Corollary \ref{small}, we have

\[\alpha^d_r(k) \geq \left(k!(r-1)^{k+1}+1\right)^{1-r}/e \geq 2^{-10k(dr)^{2\log^{\ast}k}}.\]

For the inductive step, let $k \geq 8$ and assume that the statement holds for all $k' < k$.  Let $\mathcal{F}$ be a non-empty multifamily of at most $k$-element sets with VC-dimension at most $d$.  Without loss of generality, we may assume that the ground set is $\mathbb{N}$. Let $\epsilon_i$ be the fraction of sets in $\mathcal{F}$ that contain $i$. By reordering the elements of the ground set, if necessary, we may also assume that $\epsilon_1 \geq \epsilon_2 \geq \ldots$, that is, the elements of the ground set are ordered in decreasing frequency.
\smallskip

As each member of $\mathcal{F}$ has size at most $k$, the expected size of the intersection of $[s]=\{1,2,\ldots,s\}$ with a randomly selected member of $\mathcal{F}$ is at most $k$. On the other hand, this expectation is $\epsilon_1+\cdots+\epsilon_s \geq s\epsilon_s$. Therefore, we have $\epsilon_s \leq k/s$.

Set $s=\lceil4k^4/\alpha^d_r(\log k)\rceil$.  Define two multifamilies, $\mathcal{F}_1$ and $\mathcal{F}_2$, as follows. Let
\[\mathcal{F}_1=\{S:S \in \mathcal{F}~\textrm{and}~|S \cap [s]| \leq \log k\},\;\;\;\;\; \mathcal{F}_2=\mathcal{F}\setminus \mathcal{F}_1.\] Thus, we have $|\mathcal{F}|=|\mathcal{F}_1|+ |\mathcal{F}_2|$.   We select at random, uniformly and independently with repetition, $r$ sets $S_1,\ldots, S_r \in \mathcal{F}$.  Let $X$ denote the event that the $r$ sets form an $r$-sunflower.  The proof now falls into two cases.

\medskip

\noindent \emph{Case 1:} Suppose that $|\mathcal{F}_1| \geq (1-1/r)|\mathcal{F}|$.  Let $Y$ denote the event that $S_1,\ldots, S_r \in \mathcal{F}_1$. Let $Z$ be the event that $S_1\cap [s],\ldots, S_r\cap [s]$ have pairwise equal intersections, and let $W$ be the event that $S_1\setminus [s],\ldots, S_r\setminus[s]$ are pairwise disjoint.  Hence,

\begin{equation}\label{ineq}
  \mathbb{P}[X]  \geq \mathbb{P}[Y\cap Z\cap W] = \mathbb{P}[Y\cap Z]- \mathbb{P}[Y\cap Z \cap \bar W] \geq \mathbb{P}[Y\cap Z] - \mathbb{P}[\bar{W}].
\end{equation}

\noindent Clearly, we have

\begin{equation}\label{Y}
\mathbb{P}[Y] \geq  (1 - 1/r)^r \geq \frac14,
\end{equation}

\noindent and, by definition,

\begin{equation}\label{ZY}
   \mathbb{P}[Z\mid Y] \geq \alpha^d_r(\log k).
\end{equation}

Therefore, by (\ref{Y}) and (\ref{ZY}), we have

\begin{equation}\label{YcapZ}
\mathbb{P}[Y \cap Z] = \mathbb{P}[Y] \mathbb{P}[Z\mid Y] \geq \frac{1}{4}\alpha^d_r(\log k).
\end{equation}

\noindent Fixing $S_i \setminus [s]$, which has size at most $k$, the probability that $S_j \setminus [s]$ contains at least one of the elements of $S_i \setminus [s]$ is at most $k\epsilon_{s+1} \leq k^2/(s+1)$. Hence, by the probability union bound, we have

 \begin{equation}\label{WC} \mathbb{P}[\bar{W}] \leq \frac{\binom{k}{2}k^2}{s+1}<\frac{k^4}{2s} \leq\frac{\alpha^d_r(\log k)}{8}.\end{equation}

\noindent Combining (\ref{ineq}), (\ref{YcapZ}), and (\ref{WC}), we obtain

\[\mathbb{P}[X]=\mathbb{P}[Y\cap Z] - \mathbb{P}[\bar{W}]  \geq \frac{\alpha^d_r(\log k)}{4} - \frac{\alpha^d_r(\log k)}{8} = \frac{\alpha^d_r(\log k)}{8}.\]

\noindent Hence, by the induction hypothesis, we have
\[\alpha^d_r(k) \geq \mathbb{P}[X] \geq \frac{1}{8}\alpha^d_r(\log k) \geq \frac{1}{8} 2^{-10(\log k)(dr)^{2\log^{\ast}k - 2}} \geq 2^{-10k(dr)^{2\log^{\ast}k}}.\]

 \medskip

\noindent \emph{Case 2}: Suppose that $|\mathcal{F}_2| \geq |\mathcal{F}|/r$. Since $\mathcal{F}$ has VC-dimension at most $d$, by the Sauer-Shelah-Perles lemma, Lemma~\ref{ss}, the number of distinct sets in $\{S \cap [s]:S \in \mathcal{F}\}$ is at most $s^d$.  By the pigeonhole principle, there is a subset $A\subset [s]$ with $|A| \geq \log k$ such that the family \[\mathcal{F}' = \{S \in \mathcal{F}: S\cap [s] = A\}\] has at least $|\mathcal{F}_2|/s^d \geq |\mathcal{F}|/(rs^d)$ members.
\smallskip

Select  $r$ sets $S_1,\ldots,S_r$ from $\mathcal{F}$ uniformly at random with repetition. Let $Y'$ denote the event that $S_1,\ldots, S_r \in \mathcal{F}'$ and let $Z'$ denote the event that $S_1\setminus [s],\ldots, S_r\setminus[s]$ form an $r$-sunflower.  Hence,
\[
\begin{array}{ccl}
\mathbb{P}[X]  & \geq   &   \mathbb{P}[Y'\cap Z'] \\\\
     & = &  \mathbb{P}[Y']\cdot \mathbb{P}[Z'\mid Y']\\\\
     & \geq & \left(\frac{1}{rs^d}\right)^r \alpha^d_r(k-\log k)\\\\
     & \geq &  \frac{1}{r^r(5k^4)^{dr}}\left(\alpha^d_r(\log k)\right)^{dr} \alpha^d_r(k-\log k).
     \end{array}\]

\noindent By the induction hypothesis, we obtain

\[\mathbb{P}[X]  \geq  \frac{1}{r^r(5k^4)^{dr}}\left(2^{-10(\log k)(dr)^{2\log^* k - 2}}\right)^{dr}\left( 2^{-10(k - \log k)(dr)^{2\log^*k}} \right).\]

\noindent Since $dr\geq 6$ and $k \geq 8$, we have

\[\mathbb{P}[X]\geq  \frac{1}{r^r(5k^4)^{dr}}2^{-10k(dr)^{2\log^*k} + 8\log k(dr)^{2\log^*k}} \geq 2^{-10k(dr)^{2\log^{\ast}k}}.\]

\noindent This completes the proof.\end{proof}

\section{Littlestone dimension--Proof of Theorem~\ref{littlestone}}\label{sec4}

Originally, the Littlestone dimension was introduced for the characterization of regret bounds in online learning, see \cite{ALMM,Little,BPS}. As Chase and Freitag~\cite{ChFr} pointed out, the notion is equivalent to Shelah's model theoretic rank. The definition can also be reformulated as follows.

For a finite family $\mathcal{F}$ of sets with ground set $V$, define $\LS(\mathcal{F})$, the {\it Littlestone dimension} of $\mathcal F$, recursively. If $|\mathcal{F}| \leq 1$, then let $\LS(\mathcal{F})=0$. For an element $x$ of the ground set, let $\mathcal{F}_x = \{S \setminus \{x\}: x \in S~\textrm{and}~ S \in \mathcal{F}\}$ and $\mathcal{F}'_x = \{S: x \not \in S~\textrm{and}~ S \in \mathcal{F}\}$. If $|\mathcal{F}|>1$, then let \[\LS(\mathcal{F})=1+\max_{x \in V} \min\left(\LS(\mathcal{F}_x),\LS(\mathcal{F}'_x)\right).\]

For $d\geq 1$, let $h^d_r(k)$ be the minimum positive integer $m$ such that every family of $k$-sets with size at least $m$ and Littlestone dimension at most $d$ contains an $r$-sunflower.

\begin{lemma} For positive integers $k$ and $r$, we have $h^1_r(k)=k+r-1$.
\end{lemma}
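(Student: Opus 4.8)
The plan is to work throughout with $r\ge 3$ and to replace the hypothesis $\LS(\mathcal F)\le 1$ by an equivalent, usable combinatorial statement. From the recursive definition, for a family $\mathcal F$ of distinct sets with $|\mathcal F|\ge 2$, the condition $\LS(\mathcal F)\le 1$ is exactly the assertion that $\min\bigl(\LS(\mathcal F_x),\LS(\mathcal F'_x)\bigr)=0$ for every ground element $x$, i.e. that for every $x$ either $|\mathcal F_x|\le 1$ or $|\mathcal F'_x|\le 1$. Thus $\LS(\mathcal F)\le 1$ if and only if the following dichotomy holds: \emph{every element of the ground set lies in at most one member of $\mathcal F$ or in all but at most one member}. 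I would then set up notation: call an element \emph{rare} if it lies in exactly one member, let $D$ be the set of elements lying in every member, and $E$ the set of elements missing exactly one member; when $|\mathcal F|\ge 3$ these three classes partition the elements actually used. For $e\in E$ let $m(e)$ be the unique member avoiding $e$, and call $S\in\mathcal F$ \emph{bad} if $S=m(e)$ for some $e\in E$ (equivalently, if $S\not\supseteq D\cup E$); write $B$ for the set of bad members, so the number of non-bad members is $|\mathcal F|-|B|$.

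The key step is the structural equivalence: \emph{for $r\ge 3$, $\mathcal F$ contains an $r$-sunflower if and only if at least $r$ of its members are non-bad.} To prove it, given distinct $S_1,\dots,S_r$ put $E_i=\{e\in E:m(e)=S_i\}$; these sets are pairwise disjoint, no rare element ever lies in an intersection of two members, and a direct check gives $S_i\cap S_j=D\cup\bigl(E\setminus(E_i\cup E_j)\bigr)$. These intersections agree over all pairs if and only if all the unions $E_i\cup E_j$ agree; and if some $E_{i_0}\ne\varnothing$ then (using $r\ge 3$) $E_{i_0}\subseteq E_{j}\cup E_{\ell}$ for two other indices $j,\ell$, contradicting disjointness. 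Hence all $E_i=\varnothing$, i.e. all chosen members contain $D\cup E$. Consequently, if $\mathcal F$ has no $r$-sunflower then $|\mathcal F|-|B|\le r-1$, so $|\mathcal F|\le |B|+r-1$.

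It then remains to bound $|B|$ via the fact that all members have size exactly $k$ and contain $D$. Since $B$ is the image of $e\mapsto m(e)$, we have $|B|\le|E|$, the sets $E_S=\{e:m(e)=S\}$ partition $E$, and $|D|+|E|-|E_S|\le k$ for every $S$. If some member is non-bad, then $|D|+|E|\le k$; if moreover $|E|=k$ then $D=\varnothing$, every non-bad member equals $E$, hence there is at most one of them and $|\mathcal F|\le k+1\le k+r-2$, while if $|E|\le k-1$ then $|\mathcal F|\le|B|+r-1\le k+r-2$. If every member is bad, then $|\mathcal F|\le\sum_S|E_S|=|E|$ and each $|E_S|\ge\max(1,\,|D|+|E|-k)$, and a short calculation distinguishing $|D|+|E|-k\le1$ from $|D|+|E|-k\ge2$ again gives $|\mathcal F|\le k+1\le k+r-2$. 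This proves $h^1_r(k)\le k+r-1$. For the matching lower bound, fix a $(k-1)$-element core $C$ and let $\mathcal F$ consist of the $r-1$ sets $C\cup\{x_i\}$ with fresh distinct $x_1,\dots,x_{r-1}$, together with the $k-1$ sets $(C\setminus\{c\})\cup\{a_c,b_c\}$ over $c\in C$, the $a_c,b_c$ all fresh; then $|\mathcal F|=k+r-2$, each element of $C$ lies in all but one member and each fresh element in exactly one, so the dichotomy holds and $\LS(\mathcal F)\le 1$, while the only non-bad members are the $r-1$ sets $C\cup\{x_i\}$, so by the structural equivalence $\mathcal F$ has no $r$-sunflower. (For $k=1$ the core is empty and $\mathcal F$ is simply $r-1$ singletons.) Hence $h^1_r(k)\ge k+r-1$.

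The main obstacle I anticipate is the structural equivalence in the second paragraph: correctly deriving the formula for $S_i\cap S_j$ from the three-class partition and then extracting, from ``all pairwise unions $E_i\cup E_j$ coincide,'' that every $E_i$ is empty — this is precisely the place where the hypothesis $r\ge 3$ is essential. Once that is in place, the size-counting that bounds $|B|$ (a small case analysis on whether a member contains all of $D\cup E$) and the verification that the extremal family has Littlestone dimension at most $1$ are both routine.
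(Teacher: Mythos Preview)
Your argument is correct (for $r\ge 3$, which is the only nontrivial case), but it follows a genuinely different route from the paper. The paper proves the upper bound by a short induction on $k$: if some element $x$ misses at most one set, delete $x$ and apply induction to the resulting family of $(k-1)$-sets (gaining one in the bound), and otherwise every element is rare so the sets are pairwise disjoint and $|\mathcal F|<r$. Your proof instead gives a direct structural analysis: you classify ground elements into rare, universal ($D$), and almost-universal ($E$), prove the equivalence ``$\mathcal F$ has an $r$-sunflower $\iff$ at least $r$ members contain $D\cup E$,'' and then bound $|\mathcal F|$ by counting via the partition $\{E_S\}$ of $E$. The paper's approach is shorter; yours yields more, since the equivalence actually characterises \emph{all} sunflowers in such a family, not just whether one exists. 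Your lower-bound construction (a $(k-1)$-element core $C$, then $r-1$ sets $C\cup\{x_i\}$ and $k-1$ sets $(C\setminus\{c\})\cup\{a_c,b_c\}$) is also different from the paper's recursive ``caterpillar'' family, though both are easily checked to have the required properties.
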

\begin{proof}
We have $h^1_r(k) > k+r-2$ by considering the following family $\mathcal{F}_{r,k}$ of $k$-sets. For $k=1$, let the family consist of $r-1$ singleton sets. For $k>1$, we obtain $\mathcal{F}_{r,k}$ from $\mathcal{F}_{r,k-1}$ by adding one new ground element to all sets in $\mathcal{F}_{r,k-1}$, and then including one additional $k$-set with entirely new ground elements. It is straightforward to check that this family of $k$-sets has $k+r-2$ members, its Littlestone dimension is $1$, and it does not contain any $r$-sunflower.
\smallskip

We prove the upper bound inductively on $k$, with the base case $k=1$ being trivial.   Let $k\geq 2$ and let $\mathcal{F}$ be a family of $k$-sets with size $h^1_r(k)-1$ which has Littlestone dimension at most $1$ and does not contain an $r$-sunflower. A family of sets has Littlestone dimension at most $1$ if and only if every element $x$ of the ground set belongs to {\em at most one} or to {\em all but at most one} set in the family, that is, if $|\mathcal{F}_x| \leq 1$ or $|\mathcal{F}'_x| \leq 1$ for all $x$. If there is an element $x$ for which $|\mathcal{F}'_x| \leq 1$, then $|\mathcal{F}_x|=|\mathcal{F}|-1=h^1_r(k)-2$ and $\mathcal{F}_x$ is a family of $(k-1)$-sets of Littlestone dimension at most $1$ which does not contain an $r$-sunflower, from which we obtain $h^1_r(k-1) \leq h^1_r(k-1)+1$. If there is no ground element $x$ in more than one set in $\mathcal{F}$, then all members of $\mathcal{F}$ are disjoint. Therefore, $|\mathcal F|<r$ and $h^1_r(k) \leq r$.
\end{proof}

\begin{lemma}\label{popularelement}
For any family $\mathcal{F}$ of sets of size at most $k$ with no $(r+1)$-sunflower, there is an element of the ground set which belongs to at least a $\frac{1}{kr}$-fraction of the sets.
\end{lemma}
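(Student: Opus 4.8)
The plan is the classical greedy-transversal argument, exploiting that forbidding an $(r+1)$-sunflower forbids in particular $r+1$ pairwise disjoint members. First I would record this: if $T_1,\dots,T_{r+1}\in\mathcal F$ were pairwise disjoint, then $T_i\cap T_j=\emptyset$ for all $i\neq j$, so $T_1,\dots,T_{r+1}$ would form an $(r+1)$-sunflower, contradicting the hypothesis. Hence the packing number satisfies $\nu(\mathcal F)\le r$. I would also dispose of trivial cases up front: if $|\mathcal F|\le kr$ the claim is immediate, since any element of any (nonempty) member lies in at least $1\ge|\mathcal F|/(kr)$ sets; so we may assume $|\mathcal F|>kr$, and in particular that $\mathcal F$ has nonempty members. (If $\emptyset\in\mathcal F$, simply delete it first: being disjoint from everything, its removal lowers the packing number of the remaining subfamily to at most $r-1$, which only helps, and since $|\mathcal F|>kr\ge r$ the resulting bound is still at least $|\mathcal F|/(kr)$.)

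Next I would choose a \emph{maximal} collection $S_1,\dots,S_t\in\mathcal F$ of pairwise disjoint sets. By the first step, $t\le r$. Put $U=S_1\cup\cdots\cup S_t$; since each $S_i$ has at most $k$ elements, $|U|\le tk\le rk$. By maximality of the collection, every member $S\in\mathcal F$ must meet $U$ — otherwise $S$ could be adjoined to $S_1,\dots,S_t$, contradicting maximality. Thus $U$ is a transversal of $\mathcal F$ of size at most $rk$.

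Finally I would average over $U$. Counting incident pairs two ways,
\[
\sum_{v\in U}\bigl|\{S\in\mathcal F:\ v\in S\}\bigr| \;=\; \sum_{S\in\mathcal F}|S\cap U| \;\ge\; |\mathcal F|,
\]
since each $S$ meets $U$ in at least one element. Hence some $v\in U$ belongs to at least $|\mathcal F|/|U|\ge|\mathcal F|/(rk)$ members of $\mathcal F$, which is the assertion. I do not anticipate a real obstacle here: the argument is a one-line greedy covering bound once the packing-number observation is in place, and the only thing requiring any care is the bookkeeping for the degenerate cases ($\emptyset\in\mathcal F$, or $|\mathcal F|$ smaller than $kr$), both handled as indicated above.
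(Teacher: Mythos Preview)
Your proof is correct and follows essentially the same approach as the paper: take a maximal (the paper says ``maximum'') subfamily of pairwise disjoint sets, observe it has size at most $r$ because disjoint sets form a sunflower, note its union of at most $kr$ elements is a transversal, and pigeonhole. The paper's version is a three-sentence sketch that omits your edge-case bookkeeping, but the argument is the same.
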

\begin{proof}
Consider a maximum family $\{S_1,\ldots,S_s\}$ of sets in $\mathcal{F}$ which are pairwise disjoint. Such a family forms a sunflower and hence $s \leq r$. In particular, any set in $\mathcal{F}$ contains at least one element from $\bigcup_{i=1}^s S_i$, which has a total of $ks \leq kr$ elements. By the pigeonhole principle, there is an element of the ground set which belongs to at least a fraction $\frac{1}{kr}$ of the sets in $\mathcal{F}$.
\end{proof}

\begin{lemma}\label{recursiveLS}
For integers $k,r \geq 1$ and $d\geq 2$, we have \[h^d_r(k) \leq \max\left(k(r-1)\left(h^{d-1}_r(k-1)-1\right)+1,h^d_r(k-1)+h^{d-1}_r(k)-1\right).\]
\end{lemma}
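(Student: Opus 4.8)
The plan is to use the recursive characterization of Littlestone dimension together with Lemma~\ref{popularelement}, and to split into two cases depending on whether some ground element is ``very popular'' in $\mathcal{F}$. Let $\mathcal{F}$ be a family of $k$-sets with $\LS(\mathcal{F}) \le d$ that contains no $r$-sunflower; we wish to bound $|\mathcal{F}|$. First I would observe that since $\mathcal{F}$ has no $r$-sunflower, in particular it has no $(r-1)+1 = r$-sunflower, so Lemma~\ref{popularelement} (applied with ``$r$'' there equal to $r-1$) gives an element $x$ of the ground set lying in at least a $\frac{1}{k(r-1)}$-fraction of the members of $\mathcal{F}$.

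Now fix such an element $x$ and consider $\mathcal{F}_x = \{S \setminus \{x\} : x \in S,\ S \in \mathcal{F}\}$ and $\mathcal{F}'_x = \{S : x \notin S,\ S \in \mathcal{F}\}$. By the recursive definition of Littlestone dimension, $\min(\LS(\mathcal{F}_x), \LS(\mathcal{F}'_x)) \le d-1$, so at least one of the two subfamilies has Littlestone dimension at most $d-1$; the other has Littlestone dimension at most $d$ (since both are subfamilies of $\mathcal{F}$ after deleting $x$, which cannot increase the dimension). Neither subfamily contains an $r$-sunflower: an $r$-sunflower in $\mathcal{F}_x$ would yield one in $\mathcal{F}$ by adding $x$ back to each set (the pairwise intersections all gain $x$), and $\mathcal{F}'_x$ is literally a subfamily of $\mathcal{F}$. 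Also $\mathcal{F}_x$ consists of $(k-1)$-sets and $\mathcal{F}'_x$ of $k$-sets. The case split is: either $\LS(\mathcal{F}_x) \le d-1$, or else $\LS(\mathcal{F}_x) = d$ which forces $\LS(\mathcal{F}'_x) \le d-1$.

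In the first case, $|\mathcal{F}_x| \le h^{d-1}_r(k-1) - 1$, and since $|\mathcal{F}_x| \ge |\mathcal{F}|/(k(r-1))$ we get $|\mathcal{F}| \le k(r-1)(h^{d-1}_r(k-1)-1)$, which is below the first term of the max. In the second case, $|\mathcal{F}'_x| \le h^{d-1}_r(k) - 1$; for $\mathcal{F}_x$ we use instead that it is a family of $(k-1)$-sets with Littlestone dimension at most $d$ and no $r$-sunflower, so $|\mathcal{F}_x| \le h^{d}_r(k-1) - 1$, giving $|\mathcal{F}| = |\mathcal{F}_x| + |\mathcal{F}'_x| \le h^d_r(k-1) + h^{d-1}_r(k) - 2$, which is below the second term of the max. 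In both cases $|\mathcal{F}| $ is strictly less than the claimed bound on $h^d_r(k)$, which establishes the inequality.

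The only real subtlety I anticipate is being careful with the case analysis: one must avoid choosing a popular element and then finding it is the one whose two branches both have dimension $d$, which is why the choice of $x$ from Lemma~\ref{popularelement} is used only to control $|\mathcal{F}_x|$ in the first case, while in the second case the popularity of $x$ is not needed at all — the bound there comes purely from $|\mathcal{F}| = |\mathcal{F}_x| + |\mathcal{F}'_x|$. One should double-check the off-by-one bookkeeping (families of size $h(\cdot)-1$ versus $h(\cdot)$) and confirm that deleting an element never increases Littlestone dimension, which is immediate from the recursive definition since $\mathcal{F}_x$ and $\mathcal{F}'_x$ already appear one level down in the recursion for $\mathcal{F}$.
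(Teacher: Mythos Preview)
Your proposal is correct and follows essentially the same argument as the paper: pick a popular element $x$ via Lemma~\ref{popularelement}, use the recursive definition of Littlestone dimension to force $\min(\LS(\mathcal{F}_x),\LS(\mathcal{F}'_x))\le d-1$, and split into the two cases according to which branch drops in dimension, obtaining the two terms of the $\max$ exactly as the paper does. Your discussion of the off-by-one bookkeeping and of why $\LS(\mathcal{F}_x)\le d$ in the second case is, if anything, more explicit than the paper's own proof.
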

\begin{proof}
Let $\mathcal{F}$ be a family of $k$-sets with size $h^d_r(k)-1$ which has Littlestone dimension at most $d$ and does not contain an $r$-sunflower. By Lemma \ref{popularelement}, there is an element $x$ of the ground set in at least a fraction $\frac{1}{k(r-1)}$ of the sets in $\mathcal{F}$. As $\mathcal{F}$ has Littlestone dimension $d$, at least one of $\mathcal{F}_x$ or $\mathcal{F}'_x$ has Littlestone dimension at most $d-1$.
\smallskip

If $\mathcal{F}_x$ has Littlestone dimension at most $d-1$, then $\mathcal{F}_x$ is a family of $(k-1)$-sets which has no $r$-sunflower, and hence
\[\frac{1}{k(r-1)}\left(h^d_r(k)-1\right) = \frac{1}{k(r-1)}|\mathcal{F}| \leq |\mathcal{F}_x| \leq h^{d-1}_r(k-1)-1,\] from which it follows that $h^d_r(k) \leq k(r-1)\left(h^{d-1}_r(k-1)-1\right)+1$.
\smallskip

If $\mathcal{F}'_x$ has Littlestone dimension at most $d-1$, then we have $|\mathcal{F}'_x| \leq h^{d-1}_r(k)-1$ and $|\mathcal{F}'_x| \leq h^d_r(k-1)-1$, from which it follows that \[h^d_r(k)-1 = |\mathcal{F}|=|\mathcal{F}_x|+|\mathcal{F}'_x| \leq  h^{d-1}_r(k)-1+h^d_r(k-1)-1,\] and, hence, $h^d_r(k) \leq h^{d-1}_r(k)+h^d_r(k-1)-1$.
\end{proof}

We can now prove Theorem~\ref{littlestone}.

\begin{proof}[Proof of Theorem~\ref{littlestone}]
For the upper bound, the proof is by induction on the Littlestone dimension $d$. In the base case $d = 1$, we have $h^1_r(k) = k+r-1 \leq kr$. Suppose $d\geq 2$.  Consider the recursive upper bound on $h^d_r(k)$ from Lemma \ref{recursiveLS}. We split the proof into two cases depending on the maximum of the two functions in the upper bound on $h_r^d(k)$. In each case, we use the induction hypothesis. 

In the first case, we have \[h^d_r(k) \leq k(r-1)\left(h^{d-1}_r(k-1)-1\right)+1 \leq kr(kr)^{d-1}=(kr)^{d}.\] In the latter case, we have \[h^d_r(k) \leq h^{d-1}_r(k)+h^d_r(k-1)-1 < (kr)^{d-1}+((k-1)r)^{d} \leq (kr)^{d-1}+(1-\frac{1}{k})(kr)^{d} < (kr)^{d}.\] In either case, we obtained the desired bound.

\medskip 

For the lower bound, let $d \geq 6$, $r \geq 3$, $k$ be sufficiently large with $k \geq 4d$, $n=k^2r/(500d\log k)$, $t=\lceil \log d \rceil$ and $m=n^{-1}(n/k)^{d-t}$. We use the probabilistic method to show that there exists a family $\mathcal{F}$ of $k$-element subsets of $[n]:=\{1,\ldots,n\}$ with $|\mathcal{F}| \geq m/2$, $\mathcal{F}$ does not contain any $r$-sunflower, and the Littlestone dimension of $\mathcal{F}$ is at most $d$. This implies the desired lower bound on $h^d_r(k)$. 

We will show that $\mathcal{F}$ satisfies four properties each with high probability. This means that the probability is of the form $1-o(1)$ with the $o(1)$ term tending to $0$ as $k$ tends to infinity. Hence, all four properties hold with high probability. These four properties guarantee that $\mathcal{F}$ has the desired properties and hence there is a choice of $\mathcal{F}$ with the desired properties. 

Pick $m$ subsets $S_1,\ldots,S_m \subset {[n] \choose k}$ uniformly and independently at random. Let $\mathcal{F}$ be the family of distinct $S_i$.  Since $m \ll {n \choose k}$, it is easy to see that, with high probability, we have $|\mathcal{F}| \geq m/2$. 

We next show that, with high probability, $\mathcal{F}$ does not contain any $r$-sunflower. Consider a subsequence of $r$ of these random sets, say $S_1,\ldots,S_r$. The number of sequences of $r$ sets in ${[n] \choose k}$ which have pairwise equal intersection, and this intersection has size $s$, is \[{n \choose s}\prod_{i=1}^r {n-s-(i-1)(k-s) \choose k-s} = s!^{-1}(k-s)!^{-r}n! / \left(n-s-r(k-s)\right)!\]
This is because there are ${n \choose s}$ ways of choosing the common intersection of size $s$, and given the $S_j$ with $j<i$, the remaining $k-s$ elements from $S_i$ not in the common intersection must be chosen from the $n-s-(i-1)(k-s)$ elements not in any of the $S_j$ with $j<i$. As there are ${n \choose k}$ ways to pick each $S_i$, the probability that the $r$ random sets $S_1,\ldots,S_r$ have pairwise equal intersection of size $s$ is 
 \begin{equation}\label{threeproduct} {n \choose k}^{-r} s!^{-1}(k-s)!^{-r}\frac{n!}{\left(n-s-r(k-s)\right)!} = {k \choose s} \cdot \left(k!/(k-s)!\right)^{r-1} \cdot \frac{n!/(n-s-r(k-s))!}{\left(k!{n \choose k}\right)^{r}} .\end{equation}

Note that the expression in the right hand side of (\ref{threeproduct}) is the product of three factors. The middle factor is at most $k^{s(r-1)}$. In the third factor in the right hand side of (\ref{threeproduct}), the numerator can be expressed as the product of factors $(n-j)$ for $j=0,\ldots,s+r(k-s)-1$, which is a total of $s+r(k-s)$ factors, while the denominator can be expressed as the product of $rk$ factors which are of the form $(n-h)$ with $h \leq k$. It follows that the third factor in the right hand side of (\ref{threeproduct}) is at most \[(n-k)^{-s(r-1)}\prod_{j=1}^{(r-1)(k-s)-1}\left(1-\frac{j}{n-k}\right) \leq (n-k)^{-s(r-1)}e^{-(r-1)^2(k-s)^2 /(4n)},\]
where we used the inequality $1-x \leq e^{-x}$ for $x\geq 0$ to bound each factor in the product. 

It follows that the expression on the right hand side of (\ref{threeproduct}) is at most \begin{equation}\label{nextone}\left(k\left(k/(n-k)\right)^{r-1}\right)^s \cdot e^{-(r-1)^2(k-s)^2/(4n)}.\end{equation} Thus, the probability that $S_1,\ldots,S_r$ form an $r$-sunflower is at most \[\label{sumprob}\sum_{s=0}^k \left(k\left(k/(n-k)\right)^{r-1}\right)^se^{-(r-1)^2(k-s)^2/(4n)}.\]
We bound the probability that there is an $r$-sunflower in $\mathcal{F}$ by taking the union bound over all  the ${m \choose r}$ choices of $r$ sets from $S_1,\ldots,S_m$. Note that (\ref{nextone}) is the product of two factors which are each at most $1$. We bound (\ref{nextone}) for $s \geq 2d$ by the first factor, and for $s < 2d$ by the second factor. We also use the inequality ${m \choose r} \leq (em/r)^r$. Substituting in the chosen values for $n$ and $m$, we get a $o(1)$ probability that there is an $r$-sunflower in $\mathcal{F}$. 

Finally, we bound the probability that $\mathcal{F}$ has Littlestone dimension greater than $d$. If $\mathcal{F}$ has Littlestone dimension greater than $d$,  in the rooted complete binary tree realizing the Littlestone dimension, going down from the root by taking the left-child each time for $d-t$ levels, we see that there are at least $2^{t}\geq d$ sets that each contain the same $d-t$ vertices. So the probability that $\mathcal{F}$ has Littlestone dimension greater than $d$ is at most the probability that there are $d$ sets in $\mathcal{F}$ that each contain the same $d-t$ elements from $[n]$. This probability in turn is at most \[{m \choose d}{n \choose d-t}\left(\frac{k}{n}\right)^{(d-t)d}<\left(\frac{em}{d}\right)^d\left(\frac{en}{d-t}\right)^{d-t}\left(\frac{k}{n}\right)^{(d-t)d}=o(1).\] Here we used the union bound over all ${m \choose d}$ choices of $d$ indices from $[m]$ and over all ${n \choose d-t}$ choices of $d-t$ distinct integers in $[n]$. We also used that the probability that a given set of $d-t$ elements in $[n]$ is in a random $k$-set is at  most $\left(k/n\right)^{d-t}$. The last inequality is by substituting in the chosen values of $n$ and $m$. \end{proof}

\section{Geometric versions of the sunflower conjecture}\label{sec5}

We start with the proof of Theorem \ref{disks}. We need the following lemma due to Sharir \cite{Sha}.

\begin{lemma}\label{sharir}
Let $\mathcal{D} = \{D_1,\ldots,D_t\}$ be a family of pseudo-disks in the plane, and let $P$ denote the set of all intersection points of the boundaries of $D_i$. Then the number of points in $P$ covered by the interior of at most $k$ other regions $D_i$ is at most $26kt$.
\end{lemma}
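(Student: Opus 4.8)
The plan is to reduce Lemma~\ref{sharir} to the linear bound on the complexity of the union of pseudo-disks and then amplify it by the Clarkson--Shor random-sampling technique. The base case $k=0$ is the observation that the vertices of the arrangement lying in the interior of no other region are exactly the vertices on the boundary of $\bigcup_i D_i$: at a transversal crossing $v=\partial D_a\cap\partial D_b$ (general position rules out tangencies) the ``opposite'' sector is covered by neither $D_a$ nor $D_b$, so $v$ is automatically on $\partial(D_a\cup D_b)$, and it remains on $\partial(\bigcup_i D_i)$ precisely when no further region has $v$ in its interior. By the theorem of Kedem, Livne, Pach, and Sharir, the union of $t$ pseudo-disks in general position has at most $6t-12$ (hence at most $6t$) vertices on its boundary. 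I regard proving this union bound as the main obstacle, and would cite it rather than reprove it; its proof goes through Euler's formula applied to the planar map cut out by the union boundary, together with the defining property that two pseudo-disk boundaries cross at most twice, which excludes the ``empty lens'' configurations that would otherwise make the map too dense.

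Next I would set up the sampling. Fix a parameter $0<p<1$ and let $R\subseteq\mathcal D$ be a random subfamily obtained by keeping each $D_i$ independently with probability $p$; both general position and the pseudo-disk property are inherited by $R$. For a vertex $v=\partial D_a\cap\partial D_b$ of the full arrangement whose depth $d(v)$ — the number of regions other than $D_a,D_b$ whose interior contains $v$ — equals $j$, the event that $v$ survives as a depth-$0$ vertex of the sub-arrangement of $R$ is exactly that $D_a,D_b\in R$ and none of the $j$ regions covering $v$ lies in $R$, which by independence has probability $p^2(1-p)^j$. Summing over all vertices $v$ of the arrangement and applying the base case to $R$,
\[
\sum_{v}p^2(1-p)^{d(v)}=\mathbb{E}\bigl[\#\{\text{depth-}0\text{ vertices of the arrangement of }R\}\bigr]\le\mathbb{E}\bigl[6|R|\bigr]=6pt,
\]
so that $\sum_v (1-p)^{d(v)}\le 6t/p$.

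Finally, write $N_{\le k}$ for the quantity to be bounded, namely the number of vertices $v$ with $d(v)\le k$. Each such vertex contributes at least $(1-p)^k$ to the left-hand side above, so $N_{\le k}(1-p)^k\le 6t/p$, i.e. $N_{\le k}\le 6t/\bigl(p(1-p)^k\bigr)$. Choosing $p=1/(k+1)$ gives $p(1-p)^k=k^k/(k+1)^{k+1}$, hence
\[
N_{\le k}\le 6t\,\frac{(k+1)^{k+1}}{k^{k}}=6kt\Bigl(1+\tfrac1k\Bigr)^{k+1}\le 24kt<26kt,
\]
using that $(1+1/k)^{k+1}$ is decreasing in $k$ with value $4$ at $k=1$. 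This yields the claimed bound; apart from the linear union bound invoked in the base case, the rest is the routine Clarkson--Shor amplification.
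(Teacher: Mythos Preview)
The paper does not give its own proof of Lemma~\ref{sharir}; it simply quotes the result from Sharir~\cite{Sha}. Your argument is correct and is essentially the standard proof: the Kedem--Livne--Pach--Sharir linear bound on the union complexity of pseudo-disks gives the $k=0$ case, and Clarkson--Shor sampling amplifies it to depth at most $k$, with the arithmetic $(1+1/k)^{k+1}\le 4$ landing you safely under $26kt$.
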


\begin{proof}[Proof of Theorem \ref{disks}]   Given $r > 2$, let $N = (500  + r)^{900k}$.   We proceed by induction on $k$.  The base case $k = 1$ is trivial.  Now assume the statement holds for $k' < k$.
\smallskip

Let $V$ be a planar point set and let $\mathcal{D} = \{D_1,\ldots,D_N\}$ be a family of pseudo-disks in the plane such that $|D_i\cap V| =k$ for all $i$.  By slightly perturbing each region $D_i$, we can assume that no point in $V$ lies on the boundary of $D_i$ for all $i$.  Set $S_i = D_i\cap V$ and $\mathcal{F} = \{S_1,\ldots, S_N\}$.
\smallskip

Let $t = \lambda(\mathcal{F})$ and suppose the sets $S_1,\ldots, S_{t} \in \mathcal{F}$ have the property that for any $1 \leq i < j \leq t$, there is a vertex $v \in S_i\cap S_j$ from $V$ such that $v \not\in S_{\ell}$ for $\ell \in [t]\setminus\{i,j\}$.  Then, by letting $C_i$ denote the boundary of $D_i$, there are at least $\binom{t}{2}$ connected components in $\mathbb{R}^2\setminus \bigcup_i C_i$ that are covered by at most two regions $D_i$.  On the other hand, by Lemma \ref{sharir}, there are at most $4(52)t$ such regions, since every point in the arrangement $\bigcup_i C_i$ is incident to at most four such connected components.  Therefore, we have

\[\binom{t}{2} \leq 208t,\]

\noindent which implies that $t \leq 417.$  

Further, we can assume that $\nu(\mathcal{F}) \leq r-1$, since otherwise we would be done.  By Lemma~\ref{stab}, we have

\[\tau(\mathcal{F}) \leq 11(417)^2(419 + r)\binom{416 + r}{417}^2\leq (500 + r)^{900}.\]

\noindent There is a vertex $v \in V$ which is incident to at least $N/\tau(\mathcal{F})$ members in $\mathcal{F}$. Let $\mathcal{D}' = \{D_i \in \mathcal{D} : v\in D_i\}$, $V' = V\setminus\{v\}$, and $S'_i = V'\cap D'_i$. Hence, $|\mathcal{D}'| \geq N/\tau(\mathcal{F}) \geq  (500  + r)^{900(k-1)}$.  By the induction hypothesis, there are $r$ sets $S'_{i_1},\ldots, S'_{i_r}$ in $\mathcal{D}'$ that form an $r$-sunflower.  Together with vertex $v$, we obtain an $r$-sunflower in $\mathcal{F}$. \end{proof}

Replacing Lemma \ref{sharir} with Clarkson's theorem on levels in arrangement of hyperplanes \cite{Cla}, the argument above gives the following.

\begin{theorem}

Given $r > 2$, there is a constant $C  = C(r)$ for which the following statement is true.  If $V$ is a point set in $\mathbb{R}^3$ and $\mathcal{H} = \{H_1,\ldots,H_N\}$ is a family of $N \geq C^k$ half-spaces such that the size of the set $S_i = H_i\cap V$ is $k$ for all $i$, then there are $r$ distinct sets $S_{i_1},\ldots,S_{i_r}$ that form an $r$-sunflower.
\end{theorem}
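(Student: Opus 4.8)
The plan is to follow the proof of Theorem~\ref{disks} essentially verbatim, replacing the planar configuration of points and pseudo-disks by a configuration of points and half-spaces in $\mathbb{R}^3$, and replacing Sharir's bound (Lemma~\ref{sharir}) by Clarkson's bound \cite{Cla} on the complexity of the $(\le k)$-level of an arrangement of $n$ hyperplanes in $\mathbb{R}^d$, which is $O(n^{\lfloor d/2\rfloor}(k+1)^{\lceil d/2\rceil})$. We induct on $k$, the case $k=1$ being trivial. Let $V\subset\mathbb{R}^3$ and let $H_1,\dots,H_N$ be half-spaces with $|H_i\cap V|=k$ and with the traces $S_i=H_i\cap V$ pairwise distinct. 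After an arbitrarily small perturbation of the $H_i$ we may assume that no point of $V$ lies on a bounding plane $P_i=\partial H_i$ and that the $P_i$ are in general position; this changes none of the $S_i$. Put $\mathcal{F}=\{S_1,\dots,S_N\}$. If $\nu(\mathcal{F})\ge r$ we are done, since $r$ pairwise disjoint sets form an $r$-sunflower; so assume $\nu(\mathcal{F})\le r-1$.

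The crucial step is to bound $t:=\lambda(\mathcal{F})$ by an absolute constant. Suppose $S_1,\dots,S_t\in\mathcal{F}$ witness $\lambda(\mathcal{F})=t$, so that for all $1\le i<j\le t$ there is a point $v_{ij}\in V\cap H_i\cap H_j$ with $v_{ij}\notin H_\ell$ for every $\ell\in[t]\setminus\{i,j\}$. Each connected component of $\mathbb{R}^3\setminus(P_1\cup\dots\cup P_t)$ is, for each $\ell\le t$, either contained in $H_\ell$ or disjoint from $H_\ell$; hence the component containing $v_{ij}$ lies in exactly the two half-spaces $H_i$ and $H_j$ among $H_1,\dots,H_t$, and distinct index pairs yield distinct components (a component containing both $v_{ij}$ and $v_{i'j'}$ would have to be simultaneously inside and disjoint from $H_\ell$ for any $\ell$ in the symmetric difference of the two pairs). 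Thus the arrangement of $P_1,\dots,P_t$ has at least $\binom{t}{2}$ cells lying in at most two of the half-spaces $H_1,\dots,H_t$, so by Clarkson's theorem with $d=3$ and level $2$ this number is $O(t)$. Therefore $\binom{t}{2}=O(t)$, which forces $t\le c_0$ for an absolute constant $c_0$.

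Since $\lambda(\mathcal{F})\le c_0$ and $\nu(\mathcal{F})\le r-1$, Lemma~\ref{stab} bounds $\tau(\mathcal{F})$ by a quantity $D(r)$ depending only on $r$. Hence some point $v\in V$ lies in at least $N/\tau(\mathcal{F})\ge N/D(r)$ of the half-spaces. Set $\mathcal{H}'=\{H_i:v\in H_i\}$ and $V'=V\setminus\{v\}$; every $H_i\in\mathcal{H}'$ satisfies $v\in S_i$, the traces $H_i\cap V'=S_i\setminus\{v\}$ are pairwise distinct $(k-1)$-sets, and $|\mathcal{H}'|\ge N/D(r)$. Choosing $C=C(r)$ with $C\ge D(r)$ gives $|\mathcal{H}'|\ge C^k/D(r)\ge C^{k-1}$, so by the induction hypothesis applied to $(V',\mathcal{H}')$ there are $r$ half-spaces $H_{i_1},\dots,H_{i_r}$ in $\mathcal{H}'$ whose traces on $V'$ form an $r$-sunflower. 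Since $v$ belongs to each of $S_{i_1},\dots,S_{i_r}$, these $r$ distinct sets themselves form an $r$-sunflower in $\mathcal{F}$, completing the induction.

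The one step needing genuine care is the bound on $\lambda(\mathcal{F})$: one must invoke the correct form of Clarkson's theorem and verify that the $\binom{t}{2}$ witnessing points occupy $\binom{t}{2}$ distinct low-level cells of the hyperplane arrangement. Everything else is a transcription of the proof of Theorem~\ref{disks}, and the dependence of the constant $C$ on $r$ enters only through the Ding--Seymour--Winkler bound (Lemma~\ref{stab}).
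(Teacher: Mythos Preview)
Your proposal is correct and follows exactly the approach indicated in the paper, which simply states that the proof of Theorem~\ref{disks} goes through verbatim once Sharir's lemma is replaced by Clarkson's bound on the number of cells at level at most $2$ in an arrangement of $t$ hyperplanes in $\mathbb{R}^3$. Your write-up in fact supplies the details the paper omits, including the verification that the $\binom{t}{2}$ witnessing points lie in distinct low-level cells, so that Clarkson's $O(t)$ bound forces $\lambda(\mathcal{F})$ to be absolutely bounded.
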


\section{Concluding Remarks}
The Erd\H{o}s-Rado sunflower conjecture remains an outstanding open problem. Although we made progress in this paper, it still remains open for families of bounded VC-dimension. 

We were able to prove  the conjecture in a geometric setting in the plane (Theorem \ref{disks}). We think it would be interesting to prove the conjecture in other geometric settings, such as for families of sets that are the intersection of the ground set with semi-algebraic sets of bounded description complexity. Such families are of bounded VC-dimension. The following conjecture is a natural special case to consider.  

\begin{conjecture}
For each integer $r \geq 3$, there is a constant $C=C(r)$ such that the following holds. If $V \subset \mathbb{R}^3$ and $\mathcal{F}$ is a family of subsets of $V$ each of size $k$ with $|\mathcal{F}| \geq C^k$ such that every set in $\mathcal{F}$ is the intersection of $V$ with a unit ball in $\mathbb{R}^3$, then $\mathcal{F}$ contains an $r$-sunflower. 
\end{conjecture}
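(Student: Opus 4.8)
The plan is to mirror the proof of Theorem~\ref{disks}. Since the set system $\{B\cap V:B\text{ a unit ball}\}$ has VC-dimension at most $4$, Theorem~\ref{main} already gives $f\le 2^{10k(4r)^{2\log^*k}}$, so the task is to remove the $(4r)^{2\log^*k}$ overhead. Exactly as in the proof of Theorem~\ref{disks}, this follows from Theorem~\ref{lambda} together with the Ding--Seymour--Winkler bound (Lemma~\ref{stab}) once one knows that $\lambda(\mathcal{F})$ is bounded by an \emph{absolute} constant $\lambda_0$ for every family $\mathcal{F}$ of this form. Granting that, one argues by induction on $k$: if $\nu(\mathcal{F})\ge r$ there are $r$ pairwise disjoint members, hence an $r$-sunflower; otherwise $\nu(\mathcal{F})\le r-1$, so Lemma~\ref{stab} gives $\tau(\mathcal{F})\le c(r)$, some $v\in V$ lies in at least $|\mathcal{F}|/c(r)$ members, and $\{S\setminus\{v\}:v\in S\in\mathcal{F}\}$ is again a unit-ball system on $V\setminus\{v\}$; recursing on $k$ then yields $|\mathcal{F}|\le c(r)^k$, as required.

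Thus everything reduces to the following geometric statement: there is an absolute constant $\lambda_0$ such that if $B_1,\dots,B_\ell$ are unit balls in $\mathbb{R}^3$ and for every pair $i\ne j$ there is a point lying in $B_i\cap B_j$ but in no other $B_m$, then $\ell\le\lambda_0$. (The reduction uses the self-duality of unit balls, $v\in B(c,1)\iff c\in B(v,1)$: a system of private common witnesses for $S_1,\dots,S_t\in\mathcal{F}$ is precisely such a configuration for the balls centred at the centres of the defining balls; the fact that the witnesses lie in $V$ is irrelevant, since the statement only asks that some point exist.) Lifting by $x\mapsto(x,\lVert x\rVert^2)$ turns unit balls in $\mathbb{R}^3$ into halfspaces in $\mathbb{R}^4$ cutting the standard paraboloid, and the statement becomes: a subquadratic (ideally linear) bound on the number of pairs $\{i,j\}$ that admit such a private point, equivalently on the complexity of the at-most-$2$-level in an arrangement of $\ell$ unit spheres in $\mathbb{R}^3$.

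The main obstacle, and the reason this conjecture is open, is exactly this estimate. In the two settings where the method already works the analogous input is classical: for pseudo-disks in the plane, Sharir's Lemma~\ref{sharir} bounds the at-most-$k$-level by $O(k\ell)$, and for points and halfspaces in $\mathbb{R}^3$, Clarkson's theorem bounds the number of $\le k$-sets by $O(\ell k^2)$; in both, the $k=2$ instance is linear in $\ell$, so $\binom{\ell}{2}\le O(\ell)$ forces $\ell=O(1)$. For unit spheres in $\mathbb{R}^3$ the generic tools fail: the worst-case complexity of the union (the $0$-level) of $\ell$ balls in $\mathbb{R}^3$ is already $\Theta(\ell^2)$, so the Clarkson--Shor machinery gives only a quadratic bound on the $2$-level, which is useless. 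To succeed one must exploit that the relevant cells are very restricted --- each is a cell at depth \emph{exactly} $2$ that is the private region of a \emph{distinct} pair --- and show that the number of such ``private pairs'' is subquadratic even when the arrangement is not. There is some ground for optimism: near any tight cluster of centres the relevant combinatorics reduces to a second-order Voronoi diagram of directions on the sphere $S^2$, which has only $O(\ell)$ cells, suggesting the true count of private pairs may itself be $O(\ell)$; the difficulty is to globalize this local heuristic into a Clarkson--Shor-type or charging argument. A ball-packing rephrasing --- a private point for $\{i,j\}$ certifies that some unit ball contains exactly the two centres $c_i,c_j$ --- meets the same bottleneck (counting $\le 2$-sets by congruent balls), so what is really needed is a new combinatorial-geometry estimate on shallow levels of congruent-ball arrangements in $\mathbb{R}^3$.
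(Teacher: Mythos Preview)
The statement you were asked to prove is not a theorem but Conjecture~6.1 in the paper's concluding remarks; the paper offers no proof and explicitly presents it as open. There is therefore no ``paper's own proof'' to compare against, and your write-up is, appropriately, not a proof but an analysis of what would be required.

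Your reduction is exactly the template the paper uses for Theorems~\ref{lambda} and~\ref{disks}: bound $\lambda(\mathcal{F})$ by an absolute constant, feed this into Lemma~\ref{stab} to get $\tau(\mathcal{F})\le c(r)$, and induct on $k$. You have also correctly isolated the missing ingredient --- a subquadratic bound on the number of depth-two ``private pair'' cells in an arrangement of $\ell$ unit spheres in $\mathbb{R}^3$ --- and correctly explained why the off-the-shelf tools (Sharir's level bound for pseudo-disks, Clarkson's $\le k$-set bound for halfspaces in $\mathbb{R}^3$) do not transfer: the union of $\ell$ congruent balls in $\mathbb{R}^3$ can have $\Theta(\ell^2)$ boundary complexity, so Clarkson--Shor sampling yields only $O(\ell^2)$ for the shallow levels, which gives no contradiction against $\binom{\ell}{2}$. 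Your self-duality observation and the paraboloid lifting are both standard and correctly stated.

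In short, your proposal is not a proof and was never going to be one, but it accurately diagnoses why the paper leaves this as a conjecture and pinpoints the precise geometric estimate that would settle it.
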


\medskip

\noindent {\bf Acknowledgement.} We would like to thank Amir Yehudayoff for suggesting working with the Littlestone dimension, and the SoCG 2021 referees for helpful comments.

\end{document}